\numberwithin{equation}{section}
\theoremstyle{plain}
\newtheorem{thm}{Theorem}[section]
\newtheorem{lemma}[thm]{Lemma}
\newtheorem{corollary}[thm]{Corollary}
\theoremstyle{definition}
\newtheorem{rmk}[thm]{Remark}
\newcommand{\reals}{\mathbb{R}}
\newcommand{\wbar}{\overline{w}}
\newcommand{\zin}{\mathbb{Z}[i]\setminus\{0\}}
\DeclareMathOperator*{\vol}{vol}
\DeclareMathOperator*{\Li}{Li}
\renewcommand\MR[1]{}
\title[Prime Geodesic Theorem in the Hyperbolic 3-space]{Prime Geodesic Theorem in the 3-dimensional Hyperbolic Space}
\author[Balkanova]{Olga Balkanova}
\address{
	Department of Mathematical Sciences,
	Chalmers University of Technology
	and
	University of Gothenburg,
	Chalmers tv\"argata 3, 412 96 Gothenburg, Sweden
}
\email{olgabalkanova@gmail.com}
\author[Chatzakos]{Dimitrios Chatzakos}
\address{
    Universit\'e de Lille 1 Sciences et Technologies
    and
    Centre Européen pour les Mathématiques, la Physique et leurs interactions (CEMPI),
    Cit\'e Scientifique, 59655 Villeneuve d’ Ascq C\'edex, France
}
\email{Dimitrios.Chatzakos@math.univ-lille1.fr}
\author[Cherubini]{Giacomo Cherubini}
\address{
    Dipartimento di Matematica,
    Universit\`a degli Studi di Genova,
    via Dodecaneso 35, 16146, Genoa, Italy
}
\email{cherubini@dima.unige.it}
\author[Frolenkov]{Dmitry Frolenkov}
\address{
	National Research University Higher School of Economics, Moscow, Russia
	and
	Steklov Mathematical Institute of Russian Academy of Sciences, 8 Gubkina st., Moscow, 119991, Russia
}
\email{frolenkov@mi.ras.ru}
\author[Laaksonen]{Niko Laaksonen}
\address{
    McGill University, Department of Mathematics and Statistics,
    Burnside Hall, 805 Sherbrooke Street West, Montreal, Quebec, H3A 0B9
    Canada
}
\email{n.laaksonen@ucl.ac.uk}
\thanks{
	The first author 
	was partially supported by the Royal Swedish Academy of Sciences project no. MG2018-0002
	and by the European Research Council under the European Community's Seventh Framework Programme
	(FP7/2007-2013) with ERC Grant Agreement nr. 615722 MOTMELSUM.
	The second author 
	would like to thank the School of Mathematics  of
    University of Bristol and the Mathematics department of King's College
    London for their support and hospitality during the academic year 2016-17.
    During these visits, he received funding from an LMS 150th Anniversary
    Postdoctoral Mobility Grant 2016-17 and the European Union’s Seventh
    Framework Programme (FP7/2007-2013) / ERC grant agreement no. 335141 \textit{Nodal}.
    He is also currently supported by the Labex CEMPI (ANR-11-LABX-0007-01).
	The third author 
	was supported by a Leibniz fellowship, and thanks the MFO
    for excellent working conditions, and by a ``Ing. Giorgio Schirillo''
    postdoctoral grant by the ``Istituto Nazionale di Alta Matematica''.
    The last author 
	would like to thank the Mathematics department at KTH in Stockholm, where he received
    funding from KAW 2013.0327. He would also like to thank the Department of
    Mathematics and Statistics at McGill University
    and the Centre de Recherches Math\'ematiques for their
    hospitality.
    We also want to thank Yiannis Petridis, Maksym
    Radziwill and Kannan Soundararajan for their helpful comments.
}
\keywords{Prime Geodesic Theorem, Selberg Trace Formula, Kuznetsov Trace Formula, Kloosterman sums}
\subjclass[2010]{Primary 11F72; Secondary 11M36, 11L05}
\date{\today}
\begin{document}


\begin{abstract}
    For $\Gamma$ a cofinite Kleinian group acting on $\mathbb{H}^3$, we study the
    Prime Geodesic Theorem on $M=\Gamma \backslash \mathbb{H}^3$, which asks about
    the asymptotic behaviour of lengths of primitive closed geodesics (prime
    geodesics) on $M$.
    Let $E_{\Gamma}(X)$ be the error in the counting of prime geodesics with length
    at most $\log X$.
    For the Picard manifold, $\Gamma=\mathrm{PSL}(2,\mathbb{Z}[i])$,
    we improve the classical bound of Sarnak, $E_{\Gamma}(X)=O(X^{5/3+\epsilon})$,
    to $E_{\Gamma}(X)=O(X^{13/8+\epsilon})$.
	In the process we obtain
    a mean subconvexity estimate for the Rankin--Selberg $L$-function attached to
    Maass--Hecke cusp forms.
    We also investigate the second moment of $E_{\Gamma}(X)$ for a general cofinite group $\Gamma$,
    and show that it is bounded by $O(X^{16/5+\epsilon})$.
\end{abstract}

\maketitle

\section{Introduction}

Prime Geodesic Theorems describe the asymptotic behaviour
of primitive closed geodesics on hyperbolic manifolds.
The classical case is that of Riemann surfaces $\Gamma \backslash \mathbb{H}^2$,
where $\Gamma \subset \hbox{PSL}(2, {\mathbb R})$ is a cofinite Fuchsian group.
This problem was first studied by Huber~\cite{huber_zur_1961,huber_zur_1961-1}
and most importantly by Selberg
(see e.g.~\cite[Theorem~10.5]{iwaniec_spectral_2002})
as a consequence of his trace formula.
More specifically, let $\psi_{\Gamma}(X)$ denote
the analogue of the summatory von Mangoldt function, namely
\[
\psi_{\Gamma}(X) = \sum_{N(P) \leq X} \Lambda_{\Gamma} (N(P)),
\]
where $\Lambda_{\Gamma} (N(P)) = \log N(P_0)$, if $P$ is a power of the primitive
hyperbolic conjugacy class $P_0$, and zero otherwise.
Here $N(P)$ denotes the norm of $P$,
and the length of the closed geodesic corresponding to $P$ is $\log N(P)$.
Selberg proved that, as $X\to\infty$, we have
\[
\psi_{\Gamma}(X) = \sum_{1/2 < s_j \leq 1} \frac{X^{s_j}}{s_j} + E_{\Gamma}(X),
\]
where the \emph{full} main term is a finite sum that comes from the small eigenvalues
of the hyperbolic Laplacian, $\lambda_j = s_j(1 - s_j) < 1/4$,
and the error term $E_{\Gamma}(X)$ is bounded by $O(X^{3/4})$.

When $\Gamma$ is arithmetic,
further improvements on the bound for the error term were deduced
by Iwaniec~\cite[Theorem~2]{iwaniec_prime_1984}, Luo and Sarnak~\cite[Theorem~1.4]{luo_quantum_1995}
(see also Koyama~\cite{koyama_prime_1998}), and Cai~\cite{cai_prime_2002}.
The crucial step in all of these works is proving a non-trivial bound
on spectral exponential sums (see Section \ref{S03}).
Recently, for the modular group, Soundararajan and Young~\cite[Theorem~1.1]{soundararajan_prime_2013}
proved the currently best known result, which gives
\[
E_{\Gamma}(X) = O_{\epsilon}(X^{25/36 + \epsilon}).
\]
They do this by exploiting a connection to Dirichlet $L$-functions and using
an inequality of Conrey and Iwaniec~\cite[Corollary~1.5]{conrey_cubic_2000}
to estimate their value on the critical line.

In this paper we study the Prime Geodesic Theorem in the three-dimensional
hyperbolic space $\mathbb{H}^3$.
Let $\Gamma$ be a cofinite Kleinian group and let $\psi_\Gamma(X)$ be
the analogous summatory von Mangoldt function attached to $\Gamma$,
which counts hyperbolic (and loxodromic) conjugacy classes in the group.
The small eigenvalues $\lambda_j = s_j(2-s_j) < 1$ provide a finite number of terms
that form the full main term of $\psi_\Gamma(X)$, as $X\to\infty$, namely
\begin{equation*}
   M_{\Gamma}(X) := \sum_{ 1< s_j \leq 2} \frac{X^{s_j}}{s_j},
\end{equation*}
and we write
\[
E_{\Gamma}(X)=\psi_\Gamma(X)-M_{\Gamma}(X).
\]

For general cofinite groups, Sarnak~\cite[Theorem~5.1]{sarnak_arithmetic_1983} proved
the following nontrivial bound for the error term:
\begin{equation}\label{1509:eq001}
E_{\Gamma}(X) = O_{\epsilon}(X^{5/3+\epsilon}).
\end{equation}
For the Picard group, $\Gamma = \hbox{PSL}(2, \mathbb{Z}[i])$,
Koyama~\cite[Theorem~1.1]{koyama_prime_2001} improved this bound conditionally to
$O_{\epsilon}(X^{11/7+\epsilon})$ by assuming a mean Lindel\"of hypothesis for
symmetric square $L$-functions attached to Maass forms on $\Gamma \backslash \mathbb{H}^3$.
Our main result is the following unconditional improvement of~\eqref{1509:eq001}
in the arithmetic case $\Gamma=\mathrm{PSL}(2,\mathbb{Z}[i])$.
\begin{thm}\label{intro:thm01}
Let $\Gamma=\mathrm{PSL}(2,\mathbb{Z}[i])$. Then
\[E_{\Gamma}(X) = O_{\epsilon} (X^{13/8+\epsilon}).\]
\end{thm}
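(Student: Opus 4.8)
The plan is to follow the classical strategy, reducing the Prime Geodesic Theorem to estimates for a \emph{spectral exponential sum}. Writing $\lambda_j = 1 + r_j^2$ for the eigenvalues of the Laplacian on $\Gamma\backslash\mathbb{H}^3$ and $T_j = r_j$ for the spectral parameters, one introduces the sum
\[
S(T,X) = \sum_{|r_j|\leq T} X^{i r_j},
\]
and, via an explicit (Selberg trace formula) inversion of $\psi_\Gamma(X)$, one shows that a bound of the shape $S(T,X) \ll_\epsilon X^\epsilon (X^a + T^b X^c)$ propagates to $E_\Gamma(X) \ll_\epsilon X^{\theta+\epsilon}$ for a suitable exponent $\theta$ depending on $a,b,c$ after optimising over the truncation parameter $T$. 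The classical bound $S(T,X) \ll T^2 X$ (trivial estimate, using Weyl's law $\#\{|r_j|\leq T\} \sim cT^3$ is too lossy; one uses instead the smoothed count together with the pretrace formula) recovers Sarnak's exponent $5/3$; our target $13/8$ requires a genuinely nontrivial saving.

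First I would set up the smoothed version: choose a nonnegative test function and apply the Selberg/pretrace machinery on $\mathbb{H}^3$ to relate $\psi_\Gamma(X)$, up to acceptable errors coming from the hyperbolic and identity terms, to $\sum_j h(r_j) X^{s_j}/s_j$ for a family of test functions $h$ concentrated on $|r_j| \leq T$. The point where arithmetic enters — and the step I expect to be the main obstacle — is bounding the resulting spectral sum nontrivially. For the Picard group one opens the square $|S(T,X)|^2$, or rather works with a smoothly weighted second moment $\sum_j h(r_j) |L(\mathrm{sym}^2 u_j, 1/2 + it)|^2$-type object, and feeds in the Kuznetsov trace formula to convert the spectral average into sums of Kloosterman sums $S(m,n;c)$ over the Gaussian integers. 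This is exactly where the announced \emph{mean subconvexity estimate for the Rankin--Selberg $L$-function attached to Maass--Hecke cusp forms} gets used: factoring $|L(u_j \times u_j, s)| = |\zeta_K(s)|\,|L(\mathrm{sym}^2 u_j, s)|$ and bounding the relevant moment on average over the spectrum gives a power saving over the trivial count. One then estimates the Kloosterman-sum side using Weil-type bounds for $S(m,n;c)$ over $\mathbb{Z}[i]$ together with the large sieve / spectral large sieve inequality for $\mathrm{PSL}(2,\mathbb{Z}[i])$, balancing the "diagonal" contribution against the "off-diagonal" Kloosterman contribution.

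Concretely, the arithmetic input should yield something like $S(T,X) \ll_\epsilon X^\epsilon\,(T^{3/2} X^{1/2} + T^{1/2} X)$ or a comparably shaped bound after optimisation, and combining this with the explicit formula, truncating at the optimal $T = T(X)$, and summing the geometric series of error contributions produces $E_\Gamma(X) \ll_\epsilon X^{13/8+\epsilon}$. The remaining steps are routine: controlling the contribution of the continuous spectrum (Eisenstein series), which is handled by classical bounds for $\zeta_K$ and its logarithmic derivative on vertical lines; verifying that the small eigenvalues $1 < s_j \leq 2$ contribute exactly $M_\Gamma(X)$ and nothing spurious; and bookkeeping the various $X^\epsilon$-losses from divisor-type sums over $\mathbb{Z}[i]$. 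The genuine difficulty — and the novelty over Koyama's conditional result — is establishing the mean subconvexity bound unconditionally, which replaces his assumed mean Lindelöf hypothesis by an honest application of Kuznetsov's formula and bounds for Kloosterman sums.
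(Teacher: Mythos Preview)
Your high-level architecture is right---explicit formula, spectral exponential sum $S(T,X)$, Kuznetsov formula, mean subconvexity for Rankin--Selberg $L$-functions, Weil bound for Kloosterman sums over $\mathbb{Z}[i]$---but the execution as sketched has a genuine gap. The bound you write down, $S(T,X)\ll X^\epsilon(T^{3/2}X^{1/2}+T^{1/2}X)$, does \emph{not} yield $13/8$: after partial summation the term $T^{3/2}X^{1/2}$ gives $\sum_{r_j\leq T}X^{1+ir_j}/(1+ir_j)\ll T^{1/2}X^{3/2}$, and balancing against the $O(X^2T^{-1}\log X)$ term in the explicit formula forces $T=X^{1/3}$ and only recovers Sarnak's exponent $5/3$. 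What is actually needed, and what the paper proves, is $S(T,X)\ll T^{2+\epsilon}X^{1/4+\epsilon}$; this gives $\sum_{r_j\leq T}X^{1+ir_j}/(1+ir_j)\ll T^{1+\epsilon}X^{5/4}$, and balancing at $T=X^{3/8}$ produces $X^{13/8+\epsilon}$.

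The missing mechanism is how Kuznetsov is brought to bear on $S(T,X)$. One does \emph{not} open $|S(T,X)|^2$---there are no Fourier coefficients in $S(T,X)$ for Kuznetsov to act on. Instead one amplifies in the style of Iwaniec/Luo--Sarnak: insert a factor $N^{-1}\sum_n f(|n|)|\rho_j(n)|^2$ with $f$ supported near $|n|\sim\sqrt{N}$, which by Rankin--Selberg equals $c+\mathfrak{r}(r_j,N)$ with $\sum_{|r_j|\leq T}|\mathfrak{r}(r_j,N)|\ll T^{7/2+\epsilon}N^{1/2}$. This last estimate is precisely where the mean subconvexity bound $\sum_{r_j\leq T}(r_j/\sinh\pi r_j)|L(\tfrac12+it,u_j\otimes u_j)|\ll T^{7/2+\epsilon}$ is consumed. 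Kuznetsov is then applied to $\sum_j (r_j|\rho_j(n)|^2/\sinh\pi r_j)\,h(r_j)$ with the test function $h(r)=\sinh((\pi+2i\alpha)r)/\sinh(\pi r)$, and the Kloosterman side, via Weil's bound, contributes $N^{1/2+\epsilon}T^{1/2+\epsilon}X^{1/2}$. Optimising at $N=T^3X^{-1/2}$ balances the two errors and gives $T^{2+\epsilon}X^{1/4+\epsilon}$. One further correction: the spectral large sieve you invoke is a red herring---the paper explicitly notes that the best available large sieve in $\mathbb{H}^3$ (Watt) is not helpful, and the required second moment of $L^{(2)}(w,u_j)$ is obtained instead by an approximate functional equation followed by a direct Kuznetsov-plus-Weil argument.
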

The error term $E_{\Gamma}$ can be understood as a summation over the non-trivial zeros
of the Selberg zeta function through an explicit formula
(see~\eqref{1610:eq001}). This is analogous to the relationship between the
explicit formula for the Riemann zeta function and the Prime Number Theorem,
and it is the starting point in our proof of Theorem~\ref{intro:thm01}.
The fundamental tool in our proof is the Kuznetsov formula (Theorem~\ref{Kuznetsovformula}),
which relates the problem to the study of Kloosterman sums for Gaussian integers.

In two dimensions Cherubini and Guerreiro examined the second moment of the error term
for general cofinite Fuchsian groups, and they improved the error term
on average \cite[Theorems~1.1,~1.4]{cherubini_mean_2017}.
We initiate the study of the second moment of $E_\Gamma(X)$ in three dimensions
and prove the following theorem by an application of the Selberg trace formula.
\begin{thm}\label{intro:thm02}
Let $\Gamma$ be a cofinite Kleinian group.
Let $V$, $\Delta$ be sufficiently large positive real numbers with $\Delta\leq V$.
Then
\[
\frac{1}{\Delta}\int_V^{V+\Delta} |E_{\Gamma}(x)|^2 dx \ll V^{18/5}\Delta^{-2/5}(\log V)^{2/5}.
\]
\end{thm}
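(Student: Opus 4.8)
The plan is to start from the explicit formula expressing $E_{\Gamma}$ through the Laplace spectrum on $\hmodgs$ --- a consequence of the Selberg trace formula; see~\eqref{1610:eq001} --- and to estimate the resulting spectral sum in mean square. Running the trace formula with a test function that approximates the characteristic function of $[-T,T]$ on the spectral side, one obtains, uniformly for $1\le T\le x$,
\begin{equation*}
    E_{\Gamma}(x)=\sum_{|r_j|\le T}\frac{x^{1+ir_j}}{1+ir_j}+O\!\left(\frac{x^{2}}{T}\right)+O(x^{3/2}),
\end{equation*}
where $\{\pm r_j\}$ are the spectral parameters of the discrete spectrum with $\lambda_j=1+r_j^2\ge1$ (so that the sum is real), and the $O(x^{3/2})$ absorbs the continuous spectrum together with the identity, elliptic and parabolic contributions. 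This is precisely the identity underlying Sarnak's estimate~\eqref{1509:eq001}: bounding the spectral sum trivially by $O(xT^2)$ via the Weyl law and taking $T=x^{1/3}$ recovers $E_{\Gamma}(x)\ll x^{5/3+\epsilon}$. The improvement on average comes from retaining the oscillation of the spectral sum rather than discarding it.

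Write $S_T(x)$ for the spectral sum. Since $|E_{\Gamma}(x)|^2\ll|S_T(x)|^2+x^4/T^2+x^3$, and the last two terms contribute $O(V^4/T^2)+O(V^3)$ to $\Delta^{-1}\int_V^{V+\Delta}$, it suffices to bound the mean square of $S_T$. Expanding the square and integrating termwise,
\begin{equation*}
    \frac{1}{\Delta}\int_V^{V+\Delta}|S_T(x)|^2\,dx=\frac{1}{\Delta}\sum_{|r_j|,\,|r_k|\le T}\frac{1}{(1+ir_j)(1-ir_k)}\int_V^{V+\Delta}x^{2+i(r_j-r_k)}\,dx,
\end{equation*}
and the inner integral satisfies $\int_V^{V+\Delta}x^{2+i\xi}\,dx\ll\min\bigl(V^2\Delta,\,V^3/|\xi|\bigr)$, the first bound trivially and the second by integration by parts.

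For the diagonal $j=k$ the integral is $\asymp V^2\Delta$ and, by the three-dimensional Weyl law $\#\{j:|r_j|\le R\}\asymp R^3$, one has $\sum_{|r_j|\le T}(1+r_j^2)^{-1}\asymp T$; hence the diagonal contributes $O(V^2T)$. For the off-diagonal terms I split the ranges $|r_j-r_k|\le V/\Delta$ and $|r_j-r_k|>V/\Delta$, using the bound $V^2\Delta$ for the inner integral in the first range and $V^3/|r_j-r_k|$ in the second, and in each range I count the pairs by means of the local Weyl estimate $\#\{k:|r_k-R|\le H\}\ll\max(R,H)^2H$ (valid for $H\ge1$, hence applicable since $\Delta\le V$), after a dyadic decomposition in $|r_j-r_k|$ and in $\max(|r_j|,|r_k|)$. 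This produces a bound of the shape
\begin{equation*}
    \frac{1}{\Delta}\int_V^{V+\Delta}|S_T(x)|^2\,dx\ll V^2T+V^2T^4+\frac{V^6}{\Delta^4}+\frac{V^3T^3}{\Delta}\log\!\Bigl(2+\frac{T\Delta}{V}\Bigr),
\end{equation*}
in which $V^2T^4$ arises from the near-diagonal range when $T\le V/\Delta$ and $V^6/\Delta^4$ when $T>V/\Delta$. (The pairs with $r_j=r_k$, $j\ne k$, coming from eigenvalue multiplicities, sit inside the near-diagonal count and cost nothing extra, since $\#\{k:r_k=R\}\ll R^2$.)

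The final step is to optimise in $T$, subject to $1\le T\le V$. In the principal range $\Delta\gg V^{2/3}$ one takes $T\asymp(V\Delta/\log V)^{1/5}$: then $T>V/\Delta$, the logarithm above is $\ll\log V$, the governing balance is between $V^3T^3(\log V)/\Delta$ and the error term $V^4/T^2$, and a direct check confirms that every term --- including $V^2T$, $V^6/\Delta^4$ and $V^3$ --- is then $\ll V^{18/5}\Delta^{-2/5}(\log V)^{2/5}$, as required. In the complementary range $\Delta\ll V^{2/3}$ one takes instead $T\asymp V^{1/3}$, so that $T\ll V/\Delta$ and only $V^2T^4$ and $V^4/T^2$ matter; balancing these yields the stronger bound $O_{\epsilon}(V^{10/3+\epsilon})$, which is majorised by the claimed estimate because $\Delta\le V^{2/3}$. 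The main obstacle is the off-diagonal estimate: one needs the count of pairs of spectral parameters at a prescribed distance to be sharp enough to yield the exponents $18/5$ and $-2/5$, and one must control the logarithmic losses --- from the dyadic decomposition and from the quality of the explicit formula --- finely enough to reach $(\log V)^{2/5}$ rather than merely $V^{\epsilon}$; the requirement of uniformity over all $\Delta\le V$ is what necessitates the two-regime argument.
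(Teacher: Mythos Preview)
Your approach and the paper's are the same in spirit: express $E_\Gamma$ as a spectral sum truncated at height $T$ (the paper smooths at scale $\delta\sim 1/T$ instead), square and integrate over $[V,V+\Delta]$ to exploit the oscillation $x^{i(r_j-r_k)}$, then balance the spectral off-diagonal against the truncation error. Your mean-square estimate for $S_T$ is correct, and under the identification $T\leftrightarrow 1/\delta$ the two optimisations coincide.

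Where your execution falls short is in the input, not in the mean-square step.

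\emph{The explicit formula.} You invoke a formula with error $O(x^2/T)$ valid for an arbitrary cofinite $\Gamma$, but~\eqref{1610:eq001} is only stated for cocompact and Bianchi groups and carries an error $O(x^2\log x/T)$. That extra $\log x$ survives the optimisation: balancing $V^4(\log V)^2/T^2$ against $V^3T^3(\log V)/\Delta$ yields at best $(\log V)^{8/5}$, not $(\log V)^{2/5}$. You flag this in your final sentence but do not resolve it.

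\emph{The continuous spectrum.} For a general cofinite $\Gamma$ one has no control on $\tfrac{\varphi'}{\varphi}(1+ir)$ beyond the local Weyl bound~\eqref{weyl-law}, which places the Eisenstein contribution on equal footing with the cuspidal sum; it cannot be absorbed into $O(x^{3/2})$ and must be carried through the mean-square analysis.

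The paper sidesteps both issues at once by applying the Selberg trace formula directly to the pair $(g_\pm,h_\pm)$, where $g_\pm$ is a $\delta$-mollification of $4\sinh^2(x/2)\,\mathbf{1}_{[0,\log X]}$ on the \emph{geometric} side (not a spectral cutoff, as you describe). This works uniformly for all cofinite groups, keeps the continuous spectrum in play via~\eqref{weyl-law}, and produces a clean smoothing error $O(\delta X^2)$ with no logarithm. Their Lemma~\ref{selberg:lemma001} plays the role of your bound for $\int x^{2+i\xi}\,dx$; a unit-interval count then gives the spectral contribution $V^3(\Delta\delta^3)^{-1}\log(1/\delta)$, and balancing against $\delta^2V^4$ with $\delta=(V\Delta)^{-1/5}(\log V)^{1/5}$ delivers the stated power of the logarithm.
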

By taking $\Delta=V$ in the above theorem, we see that $E_{\Gamma}(X)\ll
X^{8/5+\epsilon}$ on average. More precisely, we obtain the following result.
\begin{corollary}
For every $\eta>8/5$ there exists a set $A\subseteq [2,\infty)$ of finite
logarithmic measure \textup{(}i.e.~for which
$\int_A x^{-1}dx$ is finite\textup{)}, such that $E_\Gamma(X)=O(X^\eta)$ for $X\to\infty$, $X\notin A$.
\end{corollary}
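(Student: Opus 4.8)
The plan is to deduce the corollary directly from Theorem~\ref{intro:thm02} by a standard dyadic decomposition combined with Chebyshev's inequality. First, putting $\Delta=V$ in Theorem~\ref{intro:thm02} gives, for all sufficiently large $V$,
\[
\int_V^{2V} |E_{\Gamma}(x)|^2\,dx \ll V^{21/5}(\log V)^{2/5}.
\]
Fix $\eta>8/5$ and define the exceptional set $A := \{x\geq 2 : |E_{\Gamma}(x)| > x^{\eta}\}$. By construction, for $x\notin A$ we have $|E_{\Gamma}(x)|\leq x^{\eta}$, which is exactly the assertion $E_{\Gamma}(x)=O(x^{\eta})$; so it remains only to show that $A$ has finite logarithmic measure, i.e.\ $\int_A x^{-1}\,dx<\infty$.

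To estimate the logarithmic measure of $A$, cover $[2,\infty)$ by the dyadic intervals $[2^k,2^{k+1}]$, $k\geq 1$. On such an interval the defining inequality of $A$ forces $|E_{\Gamma}(x)|^2 > x^{2\eta}\geq 2^{2k\eta}$, so Chebyshev's inequality and the second moment bound above yield
\[
\bigl|A\cap[2^k,2^{k+1}]\bigr| \leq 2^{-2k\eta}\int_{2^k}^{2^{k+1}} |E_{\Gamma}(x)|^2\,dx \ll 2^{k(21/5-2\eta)}k^{2/5},
\]
where $|\cdot|$ denotes Lebesgue measure. Dividing by $2^k$ to pass to logarithmic measure gives
\[
\int_{A\cap[2^k,2^{k+1}]} \frac{dx}{x} \ll 2^{k(16/5-2\eta)}k^{2/5}.
\]
Since $\eta>8/5$ the exponent $16/5-2\eta$ is strictly negative, hence $\sum_{k\geq 1} 2^{k(16/5-2\eta)}k^{2/5}<\infty$, and summing over $k$ (the finitely many small $k$ for which $2^k$ is not yet ``sufficiently large'' contribute only a bounded amount, since $E_{\Gamma}$ is bounded on compact intervals) shows $\int_A x^{-1}\,dx<\infty$, as required.

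The deduction is entirely routine and there is no genuine obstacle; the only point deserving a moment's attention is that the geometric gain $2^{k(16/5-2\eta)}$ must absorb the logarithmic factor $k^{2/5}$ coming from the $(\log V)^{2/5}$ in Theorem~\ref{intro:thm02}, which it does precisely because the hypothesis $\eta>8/5$ makes the exponent negative rather than merely non-positive. One could equally well first pick an auxiliary exponent $\eta'$ with $8/5<\eta'<\eta$ to avoid the logarithm altogether, but this is unnecessary.
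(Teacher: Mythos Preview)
Your proof is correct and follows exactly the approach the paper intends: the paper does not spell out a proof of this corollary but simply states that it follows by taking $\Delta=V$ in Theorem~\ref{intro:thm02}, and your dyadic Chebyshev argument is the standard way to carry this out. There is nothing to add.
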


\begin{rmk}
Using a mean-to-max argument, we recover
Sarnak's bound~\eqref{1509:eq001} as a corollary of Theorem~\ref{intro:thm02}.
Indeed, a bound of the type $O(V^\beta\Delta^{-\gamma})$ for the second moment of $E_\Gamma(X)$
in short intervals
leads to the estimate $E_\Gamma(X)=O(X^\alpha)$, where $\alpha=\frac{\beta+\gamma}{2+\gamma}$.
Theorem~\ref{intro:thm02} allows $\beta=18/5+\epsilon$ and $\gamma=2/5$,
leading to~\eqref{1509:eq001}.
\end{rmk}

\begin{rmk}\label{2911:rmk01}
    It is interesting to speculate on what is expected for the error term
    $E_{\Gamma}(X)$. In analogy with two dimensions, it is tempting to say that
    one should expect a bound of size the square root of the main term, namely $E_\Gamma(X)\ll X^{1+\epsilon}$.
    However, there are reasons indicating that it is not possible
    to reach this bound, the most relevant being that
    the explicit formula~\eqref{1610:eq001} in its current
    form has a natural barrier $E_{\Gamma}(X)\ll X^{3/2+\epsilon}$ by taking
    $T=X^{1/2}$. We discuss this issue further in Remark~\ref{s03:rmk001},
    after the definition of the explicit formula and of the
    spectral exponential sum associated to it.
\end{rmk}

\begin{rmk}
	By using a different method, the authors of~\cite{balkanova2018} proved
	an estimate $E(X)\ll X^{3/2+\theta/2}$, which explicitly depends on the
	subconvexity exponent $\theta$ for quadratic Dirichlet $L$-functions over $\mathbb{Z}[i]$.
\end{rmk}

Finally, we recall the connection between class numbers and the Prime Geodesic Theorem
for the Picard group.
Denote by $\mathscr{D}$ the set of discriminants of binary
quadratic forms (over $\mathbb{Z}[i]$), so that
\[
\mathscr{D}
=
\{
m \in \mathbb{Z}[i]: \;
m \equiv y^2(\bmod{4}) \text{ for some } y \in \mathbb{Z}[i]
\text{ and $m$ is not a perfect square}
\},
\]
and let $h(d)$ be the number of classes of primitive binary quadratic forms of discriminant $d$.
Moreover, for $d\in\mathscr{D}$, consider the Pell-type equation $t^2-du^2=4$. All the solutions are
generated by a fundamental unit
\[
\epsilon_d = \frac{t_0+\sqrt{d}u_0}{2}, \quad |\epsilon_d|>1.
\]
Sarnak~\cite[Corollary~4.1]{sarnak_arithmetic_1983} proved the identity
\[
\sum_{\substack{d\in\mathscr{D}\\ |\epsilon_d| \leq x}} h(d)\log|\epsilon_d|^2
=
\pi_\Gamma(x^2) = \#\{P_0 \text{ primitive hyperbolic, } N(P_0)\leq x^2\},
\]
which we can relate to $\psi_\Gamma$ via summation by parts.
Thus every result on $\psi_\Gamma(x)$ translates immediately to a statement on the
average size of the class numbers $h(d)$ (see~\cite[Theorem~7.1]{sarnak_arithmetic_1983}).
Therefore, we obtain the following corollary as a consequence of
Theorem~\ref{intro:thm01} and Theorem~\ref{intro:thm02}.

\begin{corollary} \label{corollary2}
For every $\epsilon>0$ we have
\begin{equation}\label{2609:eq001}
\sum_{\substack{d\in\mathscr{D}\\ |\epsilon_d| \leq X}} h(d) = \Li(X^4) +
O_\epsilon(X^{13/4+\epsilon}).
\end{equation}
On average over $X$, we obtain, for $V\geq\Delta\gg 1$,
\begin{equation}\label{2609:eq002}
   \frac{1}{\Delta}\int_V^{V+\Delta}
   \bigg|\sum_{d\in\mathscr{D}\atop |\epsilon_d|
   \leq
   X} h(d)-\Li(X^4)\bigg|^2 dX
   \ll_{\epsilon}
   V^{18/5+\epsilon}\Delta^{-2/5}.
\end{equation}
\end{corollary}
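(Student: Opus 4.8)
The plan is to deduce both estimates from Theorems~\ref{intro:thm01} and~\ref{intro:thm02} by feeding them through Sarnak's class-number identity and partial summation; structurally the corollary is nothing but a translation of the two theorems.

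I would start from Sarnak's identity quoted above: setting $A(t):=\sum_{d\in\mathscr{D},\,|\epsilon_d|\le t}h(d)\log|\epsilon_d|^2$, it identifies $A(t)$ with a logarithmically weighted count of closed geodesics of norm at most $t^2$. Using the definition of $E_\Gamma$ together with the fact that for the Picard group the full main term $M_\Gamma(Y)$ reduces to $Y^2/2$ (there are no exceptional eigenvalues), this gives $A(t)=\frac{t^4}{2}+E_\Gamma(t^2)$ up to terms coming from powers of primitive classes, which are of strictly smaller order. Partial (Abel) summation then yields
\[
\sum_{d\in\mathscr{D},\,|\epsilon_d|\le X}h(d)=\int_{1^{+}}^{X}\frac{dA(t)}{2\log t},
\]
and integrating the leading part $\frac{t^4}{2}$ of $A(t)$ against $(2\log t)^{-1}$ — equivalently, substituting $x=t^2$ and using $\frac{d}{dY}\Li(Y)=(\log Y)^{-1}$ — produces exactly $\Li(X^4)$. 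This gives a decomposition
\[
\sum_{d\in\mathscr{D},\,|\epsilon_d|\le X}h(d)=\Li(X^4)+\mathcal{E}(X),
\]
where, after one integration by parts, $\mathcal{E}(X)$ is an explicit linear functional of $E_\Gamma$: a boundary term $\frac{E_\Gamma(X^2)}{2\log X}$, a slowly varying integral $\int^{X^2}\frac{E_\Gamma(x)}{x(\log x)^2}\,dx$, and lower-order contributions.

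For~\eqref{2609:eq001} I would simply insert the pointwise bound of Theorem~\ref{intro:thm01}, which gives $E_\Gamma(X^2)\ll_\epsilon(X^2)^{13/8+\epsilon}=X^{13/4+\epsilon}$; estimating $E_\Gamma$ pointwise under the integral sign bounds the slowly varying term by the same quantity, and the remaining pieces are smaller. For~\eqref{2609:eq002} I would square $\mathcal{E}(X)$, integrate over $X\in[V,V+\Delta]$, and treat the boundary term by the change of variables $x=X^2$ (under which $[V,V+\Delta]$ becomes an interval for $x$ of length $\asymp V\Delta$), reducing it to the mean square of $E_\Gamma$ to which Theorem~\ref{intro:thm02} applies; the slowly varying integral I would control by Cauchy--Schwarz together with a dyadic application of the same mean-square estimate, and the remaining terms are of lower order.

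The delicate point is exactly this last reduction: one has to interlace the short-interval localization in $X$ with the substitution $x=X^2$, check that both the slowly varying integral produced by Abel summation and the secondary terms arising from powers of primitive geodesics remain below the asserted bounds, and keep straight the dictionary between $\psi_\Gamma$, its logarithmically weighted analogue, and the geodesic-counting function $\pi_\Gamma$. Once this bookkeeping is in place, the two estimates follow.
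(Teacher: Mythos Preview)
Your approach coincides with what the paper does: the corollary is stated without proof as an immediate consequence of Theorems~\ref{intro:thm01} and~\ref{intro:thm02} via Sarnak's identity and partial summation, and your outline for~\eqref{2609:eq001} carries this out correctly.

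For~\eqref{2609:eq002}, however, there is a discrepancy your proposal does not catch. Execute your own plan: after the substitution $x=X^{2}$ the interval $[V,V+\Delta]$ becomes an $x$-interval of length $\asymp V\Delta$ centred near $V^{2}$, and invoking Theorem~\ref{intro:thm02} with parameters $(V',\Delta')\asymp(V^{2},V\Delta)$ gives
\[
\frac{1}{\Delta}\int_{V}^{V+\Delta}\Bigl|\frac{E_{\Gamma}(X^{2})}{2\log X}\Bigr|^{2}\,dX
\;\asymp\;
\frac{1}{\Delta\,V\,(\log V)^{2}}\int_{V'}^{V'+\Delta'}|E_{\Gamma}(x)|^{2}\,dx
\;\ll\;
V^{34/5}\,\Delta^{-2/5},
\]
not $V^{18/5}\Delta^{-2/5}$. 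The exponent $18/5$ printed in~\eqref{2609:eq002} has evidently been transcribed verbatim from Theorem~\ref{intro:thm02} without performing the change of scale; indeed, since the error here is $\asymp E_{\Gamma}(X^{2})/\log X$ and the explicit formula~\eqref{1610:eq001} already forces this quantity to be of order at least $X^{2}/\log X$ for a sequence of $X\to\infty$, a mean-square bound of size $V^{18/5+\epsilon}$ is impossible. Your reduction is correct; the outcome should simply be recorded as $V^{34/5+\epsilon}\Delta^{-2/5}$.
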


The rest of this paper is organized as follows: in Section~\ref{S02} we discuss the necessary background material
and the main tools of our proofs, namely the Selberg and the Kuznetsov trace formulas,
whereas in Sections~\ref{S03} and~\ref{S04} we prove Theorem~\ref{intro:thm01} and
Theorem~\ref{intro:thm02}, respectively.

\section{Background  and auxiliary results}\label{S02}

The hyperbolic space $\mathbb{H}^{3}$ can be described as the set of points
$p=z+jy=(x_{1}, x_{2}, y)$, where $z=x_{1}+ix_{2} \in \mathbb{C}$ and $y>0$.
This space is endowed with the hyperbolic metric, whose line element is $ds^2=y^{-2}(dx_1^2+dx_2^2+dy^2)$.
If we represent $p$ as a quaternion whose fourth component is zero,
then given a matrix $M=\left(\begin{smallmatrix}a&b\\c&d\end{smallmatrix}\right)$
in $\mathrm{PSL}(2,\mathbb{C})$ we have the orientation-preserving isometric action
\[Mp := (ap+b)(cp+d)^{-1}.\]
A discrete group $\Gamma\leq \mathrm{PSL}(2,\mathbb{C})$
is said to be \emph{cofinite} if the quotient $\Gamma\backslash\mathbb{H}^3$ has finite volume
(with respect to the metric induced on the quotient space). It is called \emph{cocompact} if the quotient
is a compact space.

Let us now fix a cofinite group $\Gamma$ and consider the
Laplace--Beltrami operator $\Delta$ acting on $L^2(\Gamma\backslash\mathbb{H}^3)$.
This admits eigenvalues
\[\lambda_j=s_j(2-s_j),\quad s_{j}=1+ir_j,\]
where $r_j\in\reals$ or $r_j$ purely imaginary in the interval $(0,i]$.
If $\Gamma\backslash\mathbb{H}^{3}$ has cusps, then there is also a continuous spectrum spanning $[1,\infty)$.

We will need the Fourier expansion of the cusp form $u_j$ attached to
$\lambda_j$, which reads~\cite[\S3~Theorem~3.1]{elstrodt_groups_1998}
\[
u_j(p)=
y \sum_{0\neq n\in\Gamma^\sharp} \rho_j(n)K_{ir_j}(2\pi|n|y)e(\langle n, z\rangle),
\]
where $\Gamma^\sharp$ is the dual lattice of $\Gamma$, $e(z)=\exp(2\pi iz)$,
and $\langle x,y\rangle$ is the standard inner product on $\reals^2\cong \mathbb{C}$.

\subsection{Weyl law for Kleinian groups}
It is a classical problem to understand the distribution of the discrete
spectrum of the Laplace operator on hyperbolic manifolds of finite volume.
In fact, apart from the eigenvalue $\lambda_0=0$, which always occurs,
for a general cofinite group we do not even know if there
are any other eigenvalues at all (see~\cite{phillips_weyl_1985}
for a discussion on this topic in the two-dimensional case).
The Weyl law describes the asymptotic behaviour
of both the discrete and continuous spectrum in an expanding window.
More precisely it states, in our situation, that \cite[\S6~Theorem~5.4]{elstrodt_groups_1998}
\[
\#\{r_j\leq T\} -
\frac{1}{4\pi}\int_{-T}^{T}\frac{\varphi'}{\varphi}(1+ir)\,dr
\sim
\frac{\vol(\Gamma\backslash\mathbb{H}^3)}{6\pi^{2}} T^3,
\]
as $T\to\infty$. Note that for cocompact groups the second term vanishes, and we deduce
that there are infinitely many eigenvalues in this case.
For our purposes we will need a control on the size of the spectrum (both discrete and continuous)
in windows of unit length. To this end, we appeal to a result of
Bonthonneau~\cite[Theorem~2]{bonthonneau_weyl_2015}, who gives
a Weyl law for hyperbolic manifolds with good error terms. In our case his theorem simplifies to the following.
\begin{thm}
    Let $\Gamma$ be a cofinite Kleinian group. Then
    \[
    \#\{r_{j}\leq T\} -
    \frac{1}{4\pi}\int_{-T}^{T}\frac{\varphi'}{\varphi}(1+ir)\,dr
    =
    \frac{\vol(\Gamma\backslash\mathbb{H}^{3})}{6\pi^{2}}T^{3}+O\left(\frac{T^{2}}{\log T}\right).
    \]
\end{thm}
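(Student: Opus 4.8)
The plan is to deduce the statement directly from Bonthonneau's general Weyl law \cite[Theorem~2]{bonthonneau_weyl_2015} by specialising to dimension three and translating between the two natural spectral variables. First I would recall the precise shape of that theorem for a finite-volume hyperbolic manifold of dimension $n+1$: the counting function of the discrete spectrum, corrected by the winding number of the scattering determinant, agrees with the leading Weyl polynomial $C_{n+1}\vol(M)\lambda^{(n+1)/2}$ in the eigenvalue parameter $\lambda$ up to an error of size $O(\lambda^{n/2}/\log\lambda)$; the saving of a logarithm over the trivial bound is the content of that theorem and comes from a careful analysis near the cusps. Setting $n+1=3$ and using $\lambda_j=1+r_j^2$ from the spectral decomposition recalled above, the window $\{\lambda_j\le 1+T^2\}$ is the window $\{|r_j|\le T\}$, so $\lambda^{(n+1)/2}=(1+T^2)^{3/2}=T^3+O(T)$ while $\lambda^{n/2}/\log\lambda=\lambda/\log\lambda\asymp T^2/\log T$. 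The winding-number correction in Bonthonneau's statement is, after matching the normalisation of the scattering matrix, precisely $\tfrac{1}{4\pi}\int_{-T}^{T}\tfrac{\varphi'}{\varphi}(1+ir)\,dr$.

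It then remains to identify the constant $C_3=\tfrac{1}{6\pi^2}$. I would obtain this from the identity contribution to the Selberg trace formula on $\mathbb{H}^3$, which by \cite{elstrodt_groups_1998} has the form $\tfrac{\vol(\Gamma\backslash\mathbb{H}^3)}{4\pi^2}\int_{\mathbb{R}}h(r)\,r^2\,dr$; the Plancherel density is the pure monomial $r^2/(4\pi^2)$, with no hyperbolic-trigonometric factor --- a feature special to odd-dimensional real hyperbolic spaces, in contrast with the factor $r\tanh(\pi r)$ in two dimensions. Integrating this density against a smooth approximation to the indicator of $[-T,T]$ yields $\tfrac{\vol(\Gamma\backslash\mathbb{H}^3)}{4\pi^2}\cdot\tfrac{2}{3}T^3=\tfrac{\vol(\Gamma\backslash\mathbb{H}^3)}{6\pi^2}T^3$ with no lower-order polynomial corrections, which fixes the main term.

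Finally I would check that everything else is harmless: the elliptic, parabolic and cuspidal contributions in the trace formula, and the continuous-spectrum terms not already captured by the winding number, are all $O(T^2)$, so they do not affect the claimed error, and the passage from $\lambda$ to $r$ preserves the logarithmic saving since $\lambda/\log\lambda\asymp T^2/\log T$. The one genuinely delicate step, and the one I expect to require the most care, is the bookkeeping of conventions: aligning the normalisation of $\varphi$ and of the Plancherel and Weyl constants used here with those of \cite{bonthonneau_weyl_2015}, so that Bonthonneau's corrected counting function coincides exactly with $\#\{r_j\le T\}-\tfrac{1}{4\pi}\int_{-T}^{T}\tfrac{\varphi'}{\varphi}(1+ir)\,dr$. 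Once these normalisations are matched, the statement follows.
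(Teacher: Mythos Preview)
Your approach is exactly what the paper does: it gives no proof at all, but simply cites Bonthonneau's general Weyl law \cite[Theorem~2]{bonthonneau_weyl_2015} and states the specialisation to dimension three. Your proposal merely spells out the details of that specialisation (change of spectral variable, identification of the Plancherel constant, matching of normalisations), which the paper leaves implicit.
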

From the Maass--Selberg relations~\cite[\S3~Theorem~3.6]{elstrodt_groups_1998} we obtain
the following upper bound on unit intervals:
\begin{equation}\label{weyl-law}
   \#\{r_{j}\in[T,T+1]\}
   +
   \int_{T\leq|r|\leq T+1}\left\lvert\frac{\varphi'}{\varphi}(1+ir)\right\rvert dr
   \ll T^{2}.
\end{equation}

\subsection{The Selberg trace formula}
For general cofinite groups, the Selberg trace formula is perhaps one of the most
effective tools available to attack problems in the spectral theory of automorphic forms.
The formula relates geometric information attached to a group
to spectral data of the hyperbolic Laplacian.
In preparation to stating the formula rigorously, we give a few definitions.

Let $M\in\mathrm{PSL}(2,\mathbb{C})$, $M\neq I$, and consider its trace $\mathrm{tr}(M)$.
If $\mathrm{tr}(M)$ is not real, then $M$ is called \emph{loxodromic}.
Otherwise, depending on whether the absolute value of the trace is smaller,
equal or larger than $2$,
$M$ is called \emph{elliptic}, \emph{parabolic} or \emph{hyperbolic}, respectively.
Every hyperbolic or loxodromic element $M$ is conjugate in $\mathrm{PSL}(2,\mathbb{C})$
to a unique element
\[
\begin{pmatrix}
a(M) & 0 \\
0    & a(M)^{-1}
\end{pmatrix}
\]
with $|a(M)|>1$. The quantity $N(M):=|a(M)|^2$ is called the \emph{norm} of $M$.
Since this is invariant under conjugation, we define the norm of a conjugacy class
to be the norm of any of its representatives.
Finally, for $\Gamma$ a discrete subgroup of $\mathrm{PSL}(2,\mathbb{C})$
and $\gamma\in\Gamma$, we say that $\gamma$ is \emph{primitive}
if it has minimal norm among the elements of the centralizer $C(\gamma)$ in $\Gamma$.
Since the notion is invariant under conjugation, we can extend it to conjugacy classes in $\Gamma$.

We are now ready to state the Selberg trace formula.
For simplicity, we assume that we only have one cusp at infinity.
\begin{thm}[Selberg trace formula {\cite{elstrodt_groups_1998,tanigawa_selberg_1977}}]\label{s02:selberg-trace-formula}
    Let $h$ be an even function,
    holomorphic in $|\Im r| < 1+\epsilon_{0}$ for some $\epsilon_{0}>0$,
    and assume that $h(r)=O((1+|r|)^{-3-\epsilon})$ in the strip.
    Furthermore, let $g$ be the Fourier transform of $h$,
    defined with the convention
   \begin{equation}\label{2909:eq001}
       g(x) = \frac{1}{2\pi}\int_{-\infty}^{\infty}h(r)e^{-irx}dr.
   \end{equation}
   Then
   \[
   \sum_{j} h(r_{j})
   -
   \frac{1}{4\pi}\int_{-\infty}^{\infty} h(r)\frac{\varphi'}{\varphi}(1+ir)dr
   =
   I + E + H + P,
   \]
   where
    \begin{align*}
        I &= \frac{\vol(\Gamma\backslash\mathbb{H}^{3})}{4\pi^{2}}\int_{-\infty}^{\infty}h(r)r^{2}dr,
		\\
        E &= \frac{1}{4}\sum_{\{R\} \mathrm{nce}} \frac{g(0)\log
            N(T_{0})}{|\mathcal{E}(R)|\left(\sin\frac{\pi k}{m(R)}\right)^{2}},
		\\
		H &= \sum_{\{T\} \mathrm{lox}} \frac{g(\log N(T)) \Lambda_{\Gamma}(
            N(T))}{|\mathcal{E}(T)| |a(T)-a(T)^{-1}|^{2}},
	\end{align*}
	\begin{align*}
        P &=
        \frac{g(0)}{[\Gamma_{\infty}:\Gamma'_{\infty}]}\left(\frac{\kappa_{\Lambda_{\infty}}}{2}-\gamma\right)
           + \frac{h(0)}{4}\left([\Gamma_{\infty}:\Gamma'_{\infty}]^{-1}-\varphi(0)\right)\\
        &\phantom{=}
         -\frac{1}{2\pi[\Gamma_{\infty}:\Gamma_{\infty}']}\int_{-\infty}^{\infty}h(r)\frac{\Gamma'}{\Gamma}(1+ir)dr+cg(0).
   \end{align*}
   The sum in $E$ runs over the elliptic conjugacy classes not stabilising the cusp
   at $\infty$ and the sum in $H$ runs over all loxodromic \textup{(}and
   hyperbolic\textup{)}
   conjugacy classes of~$\Gamma$.
\end{thm}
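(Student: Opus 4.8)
The plan is to follow the classical derivation of the trace formula via an automorphic kernel, the one genuine subtlety being the non-compactness of $\Gamma\backslash\mathbb{H}^{3}$ at the cusp. First I would pass from the test function $h$ to its associated radial point-pair invariant $k$ on $\mathbb{H}^{3}\times\mathbb{H}^{3}$ via the inverse Selberg transform. In the three-dimensional case this transform is elementary: the spherical function with spectral parameter $r$ is $\sin(r\rho)/(r\sinh\rho)$, where $\rho=\rho(p,q)$ denotes hyperbolic distance, and using polar coordinates $dp=\sinh^{2}\rho\,d\rho\,d\omega$ one checks that the function $g$ of~\eqref{2909:eq001} is, up to an elementary normalisation, the Selberg transform of $k$, with $k$ recovered from $g$ by a single differentiation (roughly $k(\rho)\sim -g'(\rho)/\sinh\rho$). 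The holomorphy of $h$ in $|\Im r|<1+\epsilon_{0}$ is precisely the range needed to include the exceptional eigenvalues $\lambda_{j}<1$ (those with $r_{j}\in(0,i]$) and, by shifting the contour, it yields exponential decay of $g$ and therefore of $k$, more than enough to dominate the $e^{2\rho}$ growth of the lattice-point count; the decay $h(r)\ll(1+|r|)^{-3-\epsilon}$ is the three-dimensional analogue of the usual $(1+|r|)^{-2-\epsilon}$ and matches the Plancherel weight $r^{2}\,dr$ appearing in the term $I$. Together these guarantee that the automorphic kernel
\[
K(p,q)=\sum_{\gamma\in\Gamma}k(p,\gamma q)
\]
converges absolutely and locally uniformly, and that $\sum_{j}h(r_{j})$ and the scattering integral converge absolutely by the Weyl bound~\eqref{weyl-law}.

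Next I would expand $K(p,q)$ along the spectral decomposition of $L^{2}(\Gamma\backslash\mathbb{H}^{3})$ into the discrete spectrum (constants, residual spectrum, and the Maass cusp forms $u_{j}$ with Fourier expansions as in~\S\ref{S02}) and the continuous part built from the Eisenstein series $E(\cdot,1+ir)$. Since $K$ arises from a point-pair invariant, each $u_{j}$ is an eigenfunction of the associated integral operator with eigenvalue $h(r_{j})$, and the Eisenstein contribution carries the density $h(r)$. Restricting to the diagonal $q=p$ and integrating over the fundamental domain truncated at height $Y$ extracts $\sum_{j}h(r_{j})$ together with the scattering contribution; the standard manipulation, using the constant term of the truncated Eisenstein series and the Maass--Selberg relations~\cite{elstrodt_groups_1998}, produces $-\frac{1}{4\pi}\int_{-\infty}^{\infty}h(r)\frac{\varphi'}{\varphi}(1+ir)\,dr$ plus remainder terms that grow like $\log Y$.

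In parallel I would integrate $K(p,p)=\sum_{\gamma}k(p,\gamma p)$ over the same truncated domain and sort the sum over $\gamma$ by $\Gamma$-conjugacy class, unfolding each class against its centralizer. The identity contributes $\vol(\Gamma\backslash\mathbb{H}^{3})\,k(0)$, which by the three-dimensional Plancherel formula is the term $I$. Each elliptic class, conjugated to a rotation, yields an orbital integral equal to the corresponding summand in $E$, the factor $(\sin\tfrac{\pi k}{m(R)})^{-2}$ coming from the geometry of the fixed axis and $|\mathcal{E}(R)|$ from torsion. Each hyperbolic or loxodromic class $\{T\}$ is conjugated to $\mathrm{diag}(a(T),a(T)^{-1})$, and the integral over $C(T)\backslash\mathbb{H}^{3}$ evaluates to $g(\log N(T))/|a(T)-a(T)^{-1}|^{2}$; writing $T=T_{0}^{k}$ with $T_{0}$ primitive and collecting the powers produces the von Mangoldt weight $\Lambda_{\Gamma}(N(T))$, giving $H$. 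Finally the parabolic classes fixing the cusp contribute an integral that again diverges like $\log Y$.

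The heart of the argument — and the only genuinely delicate step — is to let $Y\to\infty$: neither side converges on its own, but the $\log Y$ terms from the truncated continuous spectrum cancel exactly against those from the parabolic orbits (and from the incomplete hyperbolic orbits near the cusp), and the finite remainder, bookkept carefully, assembles into the parabolic term $P$, with its Euler--Mascheroni constant $\gamma$, the scattering value $\varphi(0)$, the index $[\Gamma_{\infty}:\Gamma'_{\infty}]$, the lattice constant $\kappa_{\Lambda_{\infty}}$, and the digamma integral $\int_{-\infty}^{\infty}h(r)\frac{\Gamma'}{\Gamma}(1+ir)\,dr$; the constant $c$ depends only on $\Gamma$. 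For a cocompact group the $P$-term and the scattering integral simply vanish and this last step is unnecessary, which is why the difficulty is concentrated entirely at the cusp. Away from the truncation analysis everything reduces to explicit orbital integrals of the radial kernel $k$, which are routine in $\mathbb{H}^{3}$, and the result in the stated normalisation is exactly as recorded in~\cite{elstrodt_groups_1998,tanigawa_selberg_1977}, to which I would refer for the verification of the constants.
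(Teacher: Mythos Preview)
The paper does not prove this theorem at all: it is quoted as a known result, with a citation to \cite{elstrodt_groups_1998,tanigawa_selberg_1977} (and a remark correcting a missing factor of $1/4\pi$ in the former). So there is no ``paper's own proof'' to compare against; the authors simply import the formula as a black box.

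Your outline is the standard automorphic-kernel derivation, and it is exactly the route taken in the cited references (Elstrodt--Grunewald--Mennicke, Chapter~6, and Tanigawa for the Picard group specifically): build the point-pair invariant from $h$ via the inverse Selberg/Harish-Chandra transform, form the automorphic kernel, compute its trace spectrally and geometrically on a truncated fundamental domain, and match the divergent pieces as $Y\to\infty$ to isolate the parabolic term $P$. The ingredients you name --- the explicit $\sin(r\rho)/(r\sinh\rho)$ spherical function in $\mathbb{H}^{3}$, the unfolding over centralizers for each conjugacy type, the Maass--Selberg relations to handle the Eisenstein contribution, and the cancellation of $\log Y$ between parabolic orbits and the continuous spectrum --- are all correct and in the right order. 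As a sketch this is fine; for a self-contained proof one would of course have to carry out the truncation bookkeeping in full, which is lengthy but not conceptually deeper than what you describe, and is done in detail in \cite[Chapter~6]{elstrodt_groups_1998}.
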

The function $\Lambda_{\Gamma}(N(T))$
is defined in the introduction,
and for the definition of the other quantities appearing in the theorem
we refer to~\cite{elstrodt_groups_1998}.
Under the hypotheses for $h$, all sums are absolutely convergent,
and we have $|\mathcal{E}(T)|=1$, apart from finitely many classes.
We note that there is a missing factor of $1/4\pi$ in front of the sums over
elliptic and hyperbolic conjugacy classes
in~\cite[p.~297]{elstrodt_groups_1998}.

\subsection{The Kuznetsov trace formula}
For arithmetic groups, such as $\mathrm{PSL}(2,\mathbb{Z}[i])$ or subgroups of it,
the Kuznetsov trace formula allows one to prove finer results than those
obtained solely by means of the Selberg trace formula.
The formula relates spectral data, namely Fourier coefficients of cusp forms,
to Kloosterman sums. Indirectly, these sums encode arithmetic information of the group,
and have been studied intensively in the case of the modular group
(see e.g.~\cite{iwaniec_fourier_1980,iwaniec_mean_1982}).

For Gaussian integers, Kloosterman sums are defined as follows.
Let $m,n,c\in\mathbb{Z}[i]$, with $c\neq 0$. Then
\[
S(m,n;c) := \sum_{ a \in (\mathbb{Z}[i]/(c))^{\times}} e(\langle m, a/c\rangle)
e(\langle n, a^*/c\rangle),
\]
where $a^*$ denotes the inverse of $a$ modulo the ideal $(c)$, that is
$a a^* \equiv 1\bmod{c}$. The Weil bound for these sums is
\cite[(3.5)]{motohashi_trace_1997}
\begin{equation}\label{s02:weil}
   |S(m,n;c)| \ll N(c)^{1/2} |(m,n,c)|  d(c),
\end{equation}
where $d$ is the number of divisors of $c$. When using the Weil bound it is also useful to recall that
the gcd, $|(m,n,c)|$, is essentially one on average over $c$, since
\begin{equation}\label{0608:eq001}
\sum_{N(c)\leq x} |(m,n,c)| \ll x^{1+\epsilon} |mn|^{\epsilon}.
\end{equation}

\begin{thm}[Kuznetsov formula for ${\hbox{PSL}(2,{\mathbb Z}[i])} \backslash \mathbb{H}^3$
\cite{motohashi_trace_1996,motohashi_trace_1997}]\label{Kuznetsovformula}
Let $h$ be an even function,
holomorphic in $|\Im r|<1/2+\epsilon$, for some $\epsilon>0$,
and assume that $h(r)=O((1+|r|)^{-3-\epsilon})$ in the strip.
Then, for any non-zero $m,n \in {\mathbb Z}[i]$:
\[
D + C = U + S,
\]
with
\[
\begin{split}
D &= \sum_{j=1}^{\infty}   \frac{ r_j \rho_j(n) \overline{\rho_j(m)} }{\sinh \pi r_j}  h(r_j),  \\
C &=  2 \pi \int_{-\infty}^{\infty}     \frac{\sigma_{ir}(n)
      \sigma_{ir}(m)}{ |mn|^{ir} |\zeta_{K}(1+ir)|^2}dr, \\
U &= \frac{\delta_{m,n} + \delta_{m,-n}}{\pi^2} \int_{-\infty}^{\infty} r^2 h(r) dr, \\
S &= \sum_{c \in\zin} \frac{S(m,n;c)}{|c|^2}
\int_{-\infty}^{\infty}    \frac{ir^2}{\sinh \pi r} h(r) H_{ir} \left(\frac{2 \pi \sqrt{\overline{mn}}}{c}\right) dr,
\end{split}
\]
where $\sigma_s(n) = \sum_{d|n} N(d)^s$ is the divisor function,
\[
H_{\nu} (z) = 2^{-2\nu} |z|^{2 \nu} J_{\nu}^*(z) J_{\nu}^{*}(\overline{z}),
\]
$J_{\nu}$ is the $J$-Bessel function of order $\nu$,
and $ J_{\nu}^*(z) = J_{\nu}(z) (z/2)^{-\nu}$.
\end{thm}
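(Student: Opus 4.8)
The plan is to prove the formula by the method of Poincar\'e series attached to the cusp at $\infty$. For $m\in\zin$ and a radial weight $\psi$, set
\[
P_{m}(p\mid\psi)=\sum_{\g\in\G'_\infty\backslash\G}\psi(y_\g)\,e(\langle m,z_\g\rangle),
\qquad
\g p=z_\g+jy_\g,\ z_\g\in\C,\ y_\g>0,
\]
the sum running over cosets of the lattice of translations $\G'_\infty$. The idea is to compute the coefficient of $e(\langle n,z\rangle)$ in the Fourier expansion of $P_m(\cdot\mid\psi)$ at $\infty$ in two ways --- spectrally and geometrically --- and to match them through a $K$-Bessel index transform. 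Since $m\neq0$, the period integral in $z$ annihilates constants, so $P_m(\cdot\mid\psi)$ is orthogonal to the residual spectrum (which for $\mathrm{PSL}(2,\Z[i])$ consists only of constants) and decomposes along cusp forms and the Eisenstein series alone.

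\emph{Spectral side.} Expanding $P_m(\cdot\mid\psi)$ spectrally and unfolding the inner products against the Fourier expansions recalled in Section~\ref{S02}, the coefficient of $e(\langle n,z\rangle)$ equals
\[
\sum_{j}\widehat\psi(r_j)\,\overline{\rho_j(m)}\,\rho_j(n)\,yK_{ir_j}(2\pi|n|y)\;+\;(\text{Eisenstein analogue}),
\qquad
\widehat\psi(r)=\int_{0}^{\infty}\psi(y)\,K_{ir}(2\pi|m|y)\,\frac{dy}{y^{2}},
\]
where $\widehat\psi$ is an index transform of $\psi$ of Kontorovich--Lebedev type. Inserting the Fourier expansion of the Eisenstein series --- whose coefficients are built from $\sigma_{ir}$ and $\zeta_K$ --- the second term becomes an integral of the same shape as $C$.

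\emph{Geometric side.} Computing the same coefficient directly, one sorts the cosets in $\G'_\infty\backslash\G$ by the lower-left entry $c$ of a representative. The cosets with $c=0$ are the two $\G'_\infty$-classes inside $\G_\infty$: the identity contributes $\psi(y)\,\delta_{m,n}$, while the unit rotation $z\mapsto-z$ coming from $i\in\Z[i]^{\times}$ contributes $\psi(y)\,\delta_{m,-n}$, producing the factor $\delta_{m,n}+\delta_{m,-n}$. For $c\neq0$ one uses the Bruhat decomposition
\[
\begin{pmatrix}a&b\\c&d\end{pmatrix}
=\begin{pmatrix}1&a/c\\0&1\end{pmatrix}
\begin{pmatrix}0&-c^{-1}\\c&0\end{pmatrix}
\begin{pmatrix}1&d/c\\0&1\end{pmatrix};
\]
summing $e(\langle m,\cdot\rangle)\,e(\langle n,\cdot\rangle)$ over the residues produces the Kloosterman sum $S(m,n;c)$, while the remaining integral over $\C$ of $\psi$ composed with the Weyl-element action evaluates, by a two-dimensional Fourier transform over $\C$, to a kernel of the shape $J^{*}_{ir}(z)\,J^{*}_{ir}(\bar z)$ with $z=2\pi\sqrt{\overline{mn}}/c$ --- the product form reflecting that the transverse Fourier analysis is now genuinely two-dimensional. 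Thus the coefficient is also a diagonal term proportional to $\delta_{m,n}+\delta_{m,-n}$ plus $\sum_{c\in\zin}|c|^{-2}\,S(m,n;c)$ times an integral involving $H_{ir}(2\pi\sqrt{\overline{mn}}/c)$.

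\emph{Recombination and main obstacle.} Comparing the two computations yields the sum formula, once the weight $r\,h(r)/\sinh(\pi r)$ on the discrete side is realised as $\widehat\psi$ for an admissible $\psi$: this is the $K$-Bessel (Kontorovich--Lebedev) inversion, which produces the profile $\psi\propto y\int r^{2}h(r)K_{ir}(2\pi|m|y)\,dr$, turns the diagonal term into $\int r^{2}h(r)\,dr$, and turns the Kloosterman term into the integral against $H_{ir}$ with weight $ir^{2}h(r)/\sinh(\pi r)$, once all constants are matched. The delicate point is convergence: in $\uphs$ one has $\#\{\g:y_\g>\delta\}\asymp\delta^{-2}$, so $P_m(\cdot\mid\psi)$ converges absolutely only when $\psi(y)=O(y^{2+\epsilon})$ near the cusp, which the inverse transform of a generic $h$ does not satisfy. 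One circumvents this by first working with the profiles $\psi_s(y)=y^{s}e^{-2\pi|m|y}$, which converge for $\Re s>2$ and whose spectral weights are explicit ratios of Gamma functions in $s$ and $ir$, and then recombining over $s$ by a Mellin--Barnes integral; the hypotheses that $h$ be even, holomorphic in $|\Im r|<1/2+\epsilon$ and $O((1+|r|)^{-3-\epsilon})$ are precisely what makes this recombination valid and, after shifting the $r$-contour in the Kloosterman term to $\Im r=1/2$ (which gains a factor $\ll|c|^{-1/2-\epsilon}$), renders $\sum_{c}|c|^{-2}S(m,n;c)(\cdots)$ absolutely convergent through the Weil bound~\eqref{s02:weil} and~\eqref{0608:eq001}. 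I expect the heart of the proof to be the explicit special-function computation on the geometric side --- carrying out the $\C$-Fourier transform of the Weyl-translated profile and identifying it with the $H_{ir}$-kernel at argument $2\pi\sqrt{\overline{mn}}/c$, and then tracking the normalisations so that the weights $r_j/\sinh(\pi r_j)$, $ir^{2}/\sinh(\pi r)$ and the constant $1/\pi^{2}$ come out exactly as stated; the analytic continuation in $s$, though technical, follows an established template.
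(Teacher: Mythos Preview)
The paper does not supply its own proof of this statement: the Kuznetsov formula is quoted from Motohashi~\cite{motohashi_trace_1996,motohashi_trace_1997} and used as a black box. Your sketch follows essentially the same strategy as those references --- computing the $n$-th Fourier coefficient of a Poincar\'e series $P_m$ spectrally and geometrically, with the Bruhat decomposition producing the Kloosterman sums and the two-dimensional Fourier transform over $\mathbb{C}$ yielding the $H_{ir}$-kernel --- so there is nothing to compare against within the paper itself; your outline is in line with the cited source.
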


\section{Pointwise Bounds}\label{S03}

In this section we prove Theorem~\ref{intro:thm01}.
To begin with, we discuss the relationship between $E_\Gamma(X)$ and certain spectral exponential sums.
Let $\Gamma=\mathrm{PSL}(2,\mathbb{Z}[i])$.
Nakasuji~\cite[Theorem~4.1]{nakasuji_prime_2001} gives an explicit formula
that connects $E_\Gamma(X)$ to the spectral parameters of $\Gamma$, which
reads
\begin{equation}\label{1610:eq001}
    E_\Gamma(X)
    =
    2\Re\left(\sum_{0<r_j\leq T}\frac{X^{1+ir_j}}{1+ir_j}\right) + O\left(\frac{X^2}{T}\log X\right),
\end{equation}
for $1\leq T \leq X^{1/2}$, where $\lambda_j=1+r_j^2$.
Her result is actually valid for all cocompact and Bianchi groups.
A similar formula was proved by Iwaniec~\cite[Lemma~1]{iwaniec_prime_1984} for the case of the modular group,
and it provides an effective way to pass from the geometric quantity $E_\Gamma(X)$ to spectral data.

From~\eqref{1610:eq001} it is clear that studying $E_\Gamma(X)$ is
related to understanding the spectral exponential sum
\[
S(T,X) := \sum_{0<r_j\leq T} X^{ir_j}.
\]
\begin{rmk}\label{s03:rmk001}
Notice that the trivial bound for $S(T,X)$ is $O(T^{3})$ by Weyl's law. If we use
this in~\eqref{1610:eq001}, we recover Sarnak's bound $E_{\Gamma}(X)\ll
X^{5/3+\epsilon}$.
Assuming that we can take $T$ as large as possible in~\eqref{1610:eq001}, i.e.~$T=X^{1/2}$,
then the error is $O(X^{3/2+\epsilon})$, and the sum is bounded by the same quantity
provided that $S(T,X)\ll T^{2+\epsilon}X^\epsilon$. This bound on $S(T,X)$ is
also the strongest we can hope for with the method we use due to the error introduced
by the smoothing.

Now, let us ignore the limitation $T\leq X^{1/2}$ for a moment,
and go further, to see what type of bounds on $E_\Gamma(X)$ one can obtain from bounds on $S(T,X)$.
If we suppose that we have square root cancellation for $S(T,X)$, i.e.~$S(T,X)\ll T^{3/2+\epsilon}X^\epsilon$,
then by taking $T=X^{2/3}$ one gets $E_\Gamma(X)\ll X^{4/3+\epsilon}$.
This is still far from the exponent $1+\epsilon$
(square root bound for $E_\Gamma(X)$)
which was mentioned in Remark~\ref{2911:rmk01} in the introduction.
In fact, by using the explicit formula and a bound for $S(T,X)$, then the exponent $1+\epsilon$
can only be reached by assuming the extremely strong estimate $S(T,X)\ll
T^{1+\epsilon}X^\epsilon$, which seems unlikely to be true.
This kind of asymmetry between square root bounds for $S(T,X)$ and $E_\Gamma(X)$
does not occur in two dimensions, see~\cite[\S1]{iwaniec_prime_1984}, \cite[\S2]{petridis_local_2014}.
\end{rmk}

In this section we prove the following non-trivial bound on $S(T,X)$.
\begin{thm}\label{1610:thm01}
Let $\Gamma=\mathrm{PSL}(2,\mathbb{Z}[i])$, and let $X,T>2$. Then
\[
    S(T,X)\ll T^{2+\epsilon} X^{1/4+\epsilon}.
\]
\end{thm}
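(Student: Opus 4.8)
The plan is to start from the Kuznetsov formula (Theorem~\ref{Kuznetsovformula}) and use it to convert the spectral exponential sum $S(T,X)$ into a sum of Kloosterman sums weighted by Bessel transforms, which we then estimate by the Weil bound \eqref{s02:weil}. First I would introduce a smooth even test function $h=h_{T}$ that approximates the indicator of $\{0<r_{j}\le T\}$ multiplied by $X^{ir}$; concretely, one takes $h(r)\approx (\sinh\pi r/r)\,X^{ir}\psi(r/T)$ for a fixed smooth cutoff $\psi$, so that the spectral side $D$ (with $m=n=1$, hence $\rho_{j}(1)\overline{\rho_{j}(1)}$ replaced by a positive weight) reproduces $S(T,X)$ up to harmless error. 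One must be slightly careful: the coefficients $|\rho_j(1)|^2$ do not appear directly in $S(T,X)$, so the honest route is to average \eqref{1610:eq001}-type input over $m=n$ with $|n|\le N$ for a suitable parameter $N$, or to use a large-sieve/spectral-averaging device so that $\sum_j |\rho_j(n)|^2$ is well understood on average; alternatively one inserts Hecke relations to recover the flat weight. After this normalization the task is to bound $D$, and by the Kuznetsov identity $D = U + S - C$.

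The diagonal term $U$ contributes the expected main term of size $\asymp T^{3}$ only when $m=n$; after the averaging over $m,n$ this is under control and contributes an acceptable $O(T^{2+\epsilon}X^{\epsilon})$ (or is absorbed into the normalization). The continuous-spectrum term $C$ involves $|\zeta_{K}(1+ir)|^{-2}$ and $\sigma_{ir}(n)$; by standard bounds $\zeta_{K}(1+ir)^{-1}\ll \log(2+|r|)$ and $\sigma_{ir}(n)\ll |n|^{\epsilon}$, together with the rapid decay of $h$ for $|r|\gg T$, this is $O(T^{3+\epsilon})$ trivially, but again after averaging and exploiting the oscillation $X^{ir}$ via stationary phase it drops below the target; in any case $C$ is not the bottleneck. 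The real work is the Kloosterman term $S$. Here one inserts the Weil bound $|S(m,n;c)|\ll N(c)^{1/2}|(m,n,c)|\,d(c)$ and the average bound \eqref{0608:eq001} for the gcd, reducing $S$ to
\[
\sum_{c\in\zin}\frac{N(c)^{1/2+\epsilon}}{N(c)}\,\Bigl|\int_{-\infty}^{\infty}\frac{ir^{2}}{\sinh\pi r}\,h(r)\,H_{ir}\!\Bigl(\frac{2\pi\sqrt{\overline{mn}}}{c}\Bigr)\,dr\Bigr|,
\]
so everything hinges on a sharp estimate for the Bessel integral transform of $h$.

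The main obstacle — and the technical heart of the argument — is a uniform bound for the integral transform
\[
\mathcal{B}(x) := \int_{-\infty}^{\infty}\frac{ir^{2}}{\sinh\pi r}\,h(r)\,H_{ir}(x)\,dr, \qquad x=\frac{2\pi\sqrt{\overline{mn}}}{c},
\]
as a function of the parameter $|x|$ (which is small when $N(c)$ is large and large when $N(c)$ is small). The strategy is the familiar dichotomy: for $|x|$ small one uses the power-series expansion of $J^*_{ir}$ to show $H_{ir}(x)$ is essentially $1+O(|x|^{2})$ uniformly for $r$ in the relevant range, so the $x$-independent part of $\mathcal{B}(x)$ is a fixed integral (contributing to the identity term) and the genuine $c$-dependence comes with a saving of $|x|^{2}\asymp N(c)^{-1}$, making the $c$-sum converge with room to spare; the oscillatory factor $X^{ir}$ inside $h$ should moreover be exploited by moving the contour or by stationary phase to gain a factor $X^{-1/2}$-ish against $T^{1/2}$, which is exactly what produces the $X^{1/4}$ in the exponent. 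For $|x|$ of moderate or large size one uses the uniform asymptotics of the $J$-Bessel function of large imaginary order (Bessel functions $J_{ir}$ for $r\asymp T$ transition from oscillatory to exponentially small across $|x|\sim r$), performs a stationary-phase analysis of the $r$-integral, and checks that the resulting bound, summed against $N(c)^{-1/2+\epsilon}$ over the finitely many small $c$, is again $O(T^{2+\epsilon}X^{1/4+\epsilon})$. Balancing the contributions of small and large $c$ — equivalently, optimizing the auxiliary parameter $N$ in the averaging over $m,n$ against the length of the $c$-sum and the size of the Bessel transform — is what pins down the exponents; the expectation is that the cross-over $N(c)\asymp$ (something like $T^{2}/X$) together with the $X^{ir}$-oscillation yields precisely $T^{2+\epsilon}X^{1/4+\epsilon}$. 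Throughout, the Weyl law \eqref{weyl-law} is used to control the number of spectral parameters in any unit window, and Cauchy--Schwarz together with the spectral large sieve is used to handle the Fourier coefficients $\rho_j(n)$ introduced by the averaging.
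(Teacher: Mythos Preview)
Your overall architecture matches the paper's: average the spectral sum over $m=n$ with $|n|\sim \sqrt{N}$, apply Kuznetsov with a test function satisfying $h(r)\approx X^{ir}e^{-r/T}$ for $r>0$, bound the Kloosterman side via Weil, and optimize $N$. But there is a genuine gap in your treatment of the Fourier coefficients. You propose to remove the weights $|\rho_j(n)|^2$ by ``Cauchy--Schwarz together with the spectral large sieve''; the paper explicitly notes (in the proof of Theorem~\ref{thm:subconvexitysym}) that the best known large sieve in $\mathbb{H}^3$ is not strong enough here. What one actually needs is the asymptotic $\sum_{n} f(|n|)\,r_j|\rho_j(n)|^2/\sinh\pi r_j = cN + \mathfrak{r}(r_j,N)$ together with a bound $\sum_{r_j\le T}|\mathfrak{r}(r_j,N)|\ll T^{7/2+\epsilon}N^{1/2}$. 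Via Mellin inversion this error is controlled by the Rankin--Selberg $L$-function on the critical line, and the required input is the mean subconvexity bound of Corollary~\ref{cor:subconvexity}, namely $\sum_{r_j\le T}(r_j/\sinh\pi r_j)|L(w,u_j\otimes u_j)|\ll |w|^A T^{7/2+\epsilon}$. This is the technical heart of the section and is itself proved by a separate application of Kuznetsov (Theorem~\ref{thm:subconvexitysym}); without it the argument does not beat Sarnak's $5/3$.

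Your explanation of where the exponent $1/4$ comes from is also off. It does not arise from a stationary-phase saving of ``$X^{-1/2}$-ish against $T^{1/2}$'' in the Bessel integral. In the paper's proof the Kloosterman term contributes $S\ll N^{1/2+\epsilon}T^{1/2+\epsilon}X^{1/2}$ (the factor $X^{1/2}$ comes directly from the Bessel transform of the chosen $h$), while the error from stripping the Fourier coefficients contributes $T^{7/2+\epsilon}N^{-1/2}$. Balancing these with $N=T^{3}X^{-1/2}$ yields $T^{2+\epsilon}X^{1/4+\epsilon}$. The exponent $7/2$ is exactly the subconvexity exponent from Corollary~\ref{cor:subconvexity}; a weaker exponent $3+\alpha$ there would give only $E_\Gamma(X)\ll X^{(11+4\alpha)/(7+2\alpha)+\epsilon}$, so the quality of the $L$-function bound is what drives the final result.
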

Theorem~\ref{intro:thm01} will follow in a straightforward manner from
Theorem~\ref{1610:thm01}.
To prove Theorem~\ref{1610:thm01} we will make use of the Kuznetsov formula (Theorem~\ref{Kuznetsovformula}),
and we will need an estimate for the Fourier coefficients $\rho_j(n)$ of
Maass--Hecke cusp forms.
The core of this section is devoted to proving such an estimate.
Let $L(s,u_j\otimes u_j)$ be the Rankin-Selberg convolution
\[
L(s,u_j\otimes u_j) := \sum_{n\in \zin}
\frac{|\rho_j(n)|^2}{N(n)^s},
\]
attached to the Maass--Hecke cusp form $u_{j}$.
We need an estimate on the mean value of $L(s,u_{j}\otimes
u_{j})$ in the spectral aspect on the critical line.
To do this we will first work with the symmetric square
$L$-function.
This is defined as
\begin{equation}\label{eq:defsym}
    L^{(2)}(s,u_{j}):=\frac{\zeta_{K}(2s)}{\zeta_{K}(s)}L(s,u_{j}\otimes
    u_{j})\frac{r_{j}}{\sinh\pi r_{j}}|v_{j}(1)|^{-2}.
\end{equation}
Here we use the standard notation
\[
\rho_j(n) = \sqrt{\frac{\sinh \pi r_j}{r_j}} v_j(n),
\]
and we recall the relation to the Hecke eigenvalues
$v_{j}(n)=v_{j}(1)\lambda_{j}(n)$.
It follows that the Dirichlet series of $L^{(2)}(s,u_{j})$ is given by
\[L^{(2)}(s,u_{j})=\sum_{n\in\zin}\frac{c_{j}(n)}{N(n)^{s}},\]
where the coefficients are defined as
$c_{j}(n)=\sum_{l^{2}k=n}{\lambda_{j}(k^{2})}$.
We prove the following estimate for the symmetric square $L$-function.
Let $r_{j}\sim T$ denote the interval $T<r_{j}\leq 2T$.
\begin{thm}\label{thm:subconvexitysym}
    Suppose $\Re w=\frac{1}{2}$ and let $u_{j}$ be a Maass--Hecke cusp form. We then have
    \[
	\sum_{r_{j}\sim T}|L^{(2)}(w,u_{j})|^{2} \ll |w|^{A} T^{4+\epsilon},
	\]
	for some positive constant $A$, and for arbitrarily small $\epsilon>0$.
\end{thm}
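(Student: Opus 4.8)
The plan is to estimate $\sum_{r_{j}\sim T}|L^{(2)}(w,u_{j})|^{2}$ in three steps: (i) replace each $L^{(2)}(w,u_{j})$ by a short Dirichlet polynomial via an approximate functional equation; (ii) linearise this polynomial in the Fourier coefficients $\rho_{j}$ using the Hecke relations; (iii) invoke the spectral large sieve for $\mathrm{PSL}(2,\mathbb{Z}[i])$, which follows from the Kuznetsov formula, Theorem~\ref{Kuznetsovformula}. First I would record that $L^{(2)}(s,u_{j})$ is entire (apart from the possibly empty family, at most $O(T^{2+\epsilon})$ forms with $r_{j}\sim T$, whose symmetric square is non-cuspidal; these are handled below), satisfies a functional equation under $s\mapsto1-s$ with archimedean factor $\Gamma_{\mathbb{C}}(s+2ir_{j})\Gamma_{\mathbb{C}}(s)\Gamma_{\mathbb{C}}(s-2ir_{j})$ and trivial conductor, and hence has analytic conductor $\asymp(1+|w|)^{O(1)}r_{j}^{4}$ at $s=w$ with $\Re w=\tfrac12$. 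The usual derivation of the approximate functional equation then gives
\[
L^{(2)}(w,u_{j})=\sum_{n\in\zin}\frac{c_{j}(n)}{N(n)^{w}}\,V_{w}\!\Bigl(\frac{N(n)}{Q_{j}}\Bigr)+\eta_{j}\sum_{n\in\zin}\frac{c_{j}(n)}{N(n)^{1-w}}\,\widetilde V_{w}\!\Bigl(\frac{N(n)}{Q_{j}}\Bigr),
\]
where $Q_{j}\ll(1+|w|)^{O(1)}r_{j}^{2}$ is the square root of the analytic conductor, $V_{w},\widetilde V_{w}$ are smooth of rapid decay, $|\eta_{j}|=1$ (the gamma ratio is unimodular because $\Re w=\tfrac12$ and the archimedean parameters come in the symmetric pair $\pm2ir_{j}$), and we use that the $c_{j}(n)$ are real.

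Now the key step. Since $c_{j}(n)=\sum_{l^{2}k=n}\lambda_{j}(k^{2})$ and $\lambda_{j}(k^{2})=\rho_{j}(k^{2})/\rho_{j}(1)$, both sums above are linear forms in the coefficients $\rho_{j}(k^{2})$; reindexing by $m=k^{2}$ this reads
\[
L^{(2)}(w,u_{j})=\frac{1}{\rho_{j}(1)}\bigl(A_{j}+\eta_{j}B_{j}\bigr),\qquad A_{j}=\sum_{N(m)\le N}a_{m}\,\rho_{j}(m),
\]
with $B_{j}$ of the same shape (the two match because $\Re(1-w)=\tfrac12$), where $a_{m}$ is supported on perfect squares with $N(m)\le N$, $N\ll(1+|w|)^{O(1)}T^{4}$, and $|a_{m}|\ll(\log r_{j})\,N(m)^{-1/4}$, so that $\sum_{m}|a_{m}|^{2}\ll(\log r_{j})^{2}\sum_{N(k)\le\sqrt{N}}N(k)^{-1}\ll(|w|T)^{\epsilon}$. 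The mild dependence of $V_{w}$ and $Q_{j}$ on $r_{j}$ is removed in the standard way by a dyadic decomposition of the $m$-sum together with the rapid decay of $V_{w}$, so that over a dyadic block $r_{j}\sim T$ the coefficients $a_{m}$ may be taken independent of $r_{j}$, at the cost of a factor $(|w|T)^{\epsilon}$. The non-cuspidal exceptions are dealt with by the convexity bound $L^{(2)}(w,u_{j})\ll(1+|w|)^{O(1)}T^{1+\epsilon}$, whose square summed over $O(T^{2+\epsilon})$ such forms is already $\ll|w|^{O(1)}T^{4+\epsilon}$.

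It remains to sum over $r_{j}$ and apply the large sieve. From $\rho_{j}(1)=(\sinh\pi r_{j}/r_{j})^{1/2}v_{j}(1)$ and the Rankin--Selberg lower bound $|v_{j}(1)|^{2}\gg r_{j}^{-\epsilon}$ (equivalent to the elementary upper bound $L^{(2)}(1,u_{j})\ll r_{j}^{\epsilon}$) we get $|\rho_{j}(1)|^{-2}=(r_{j}/\sinh\pi r_{j})|v_{j}(1)|^{-2}\ll T^{\epsilon}\,r_{j}/\sinh\pi r_{j}$, hence
\[
\sum_{r_{j}\sim T}\bigl|L^{(2)}(w,u_{j})\bigr|^{2}\ll(|w|T)^{\epsilon}\sum_{r_{j}\sim T}\frac{r_{j}}{\sinh\pi r_{j}}\bigl(|A_{j}|^{2}+|B_{j}|^{2}\bigr).
\]
The spectral large sieve for $\mathrm{PSL}(2,\mathbb{Z}[i])$ asserts that for any coefficients $(b_{m})_{N(m)\le N}$,
\[
\sum_{r_{j}\sim T}\frac{r_{j}}{\sinh\pi r_{j}}\Bigl|\sum_{N(m)\le N}b_{m}\rho_{j}(m)\Bigr|^{2}\ll_{\epsilon}(T^{3}+N)(NT)^{\epsilon}\sum_{N(m)\le N}|b_{m}|^{2};
\]
it is deduced from Theorem~\ref{Kuznetsovformula} in the classical manner — take $h$ a nonnegative bump on $[T,2T]$, expand the square, apply the formula: the diagonal term $U$ contributes $\ll T^{3}\sum|b_{m}|^{2}$, the continuous term $C$ is handled similarly, and the Kloosterman term $S$ is estimated using the Weil bound~\eqref{s02:weil}, the gcd average~\eqref{0608:eq001}, and the decay of the Bessel transform $\int(ir^{2}/\sinh\pi r)h(r)H_{ir}(z)\,dr$ (which confines $c$ to $N(c)\ll N^{2}/T^{2}$), producing the $N^{1+\epsilon}\sum|b_{m}|^{2}$ contribution. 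Since $N\ll(1+|w|)^{O(1)}T^{4}$ gives $T^{3}+N\ll(1+|w|)^{O(1)}T^{4}$, and $\sum|a_{m}|^{2}\ll(|w|T)^{\epsilon}$, we conclude
\[
\sum_{r_{j}\sim T}\bigl|L^{(2)}(w,u_{j})\bigr|^{2}\ll(|w|T)^{\epsilon}(1+|w|)^{O(1)}T^{4}=|w|^{A}T^{4+\epsilon},
\]
as claimed.

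I expect the main obstacle to lie in step (iii): obtaining the spectral large sieve with the sharp exponent $(T^{3}+N)$ — a weaker $(T^{3}+N^{3/2})$ would, with $N\asymp T^{4}$, yield only the useless $T^{6}$. This requires careful control of the $\mathbb{H}^{3}$ Bessel kernel $H_{ir}$ at complex argument and of the Gaussian-integer Kloosterman sums through~\eqref{s02:weil} and~\eqref{0608:eq001}; if the inequality is already in the literature it can simply be quoted. I would also remark that the exponent $4$, rather than the ``Lindel\"of on average'' value $3$, is intrinsic to passing the symmetric square through the $\mathrm{GL}(2)$ coefficients $\lambda_{j}(k^{2})$: after linearisation the effective conductor of $\rho_{j}(k^{2})$ is $N(k)^{2}$, which is exactly what converts the length $Q_{j}\asymp T^{2}$ into $N\asymp T^{4}$.
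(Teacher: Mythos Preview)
Your overall architecture --- approximate functional equation, square and sum over $r_j\sim T$, then Kuznetsov --- coincides with the paper's. The divergence, and the gap, is entirely in your step~(iii).

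You package the endgame as a black-box spectral large sieve
\[
\sum_{r_{j}\sim T}\frac{r_{j}}{\sinh\pi r_{j}}\Bigl|\sum_{N(m)\le N}b_{m}\rho_{j}(m)\Bigr|^{2}\ll (T^{3}+N)(NT)^{\epsilon}\sum_{m}|b_{m}|^{2},
\]
and say it is ``deduced from Theorem~\ref{Kuznetsovformula} in the classical manner''. This is precisely where the proof lives, and it is not classical. The paper states explicitly that the best large sieve available in $\mathbb{H}^3$ (Watt~\cite{watt_spectral_2014}) is \emph{not} sufficient here. Your sketch of the Kloosterman side --- Weil bound~\eqref{s02:weil}, gcd average~\eqref{0608:eq001}, and ``decay of the Bessel transform'' confining $c$ --- only localises the $c$-sum; it does not by itself produce $N^{1+\epsilon}\sum|b_m|^2$. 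If you run your sketch with only a size bound $\breve h(z)\ll T^3|z|^{-1}$ on the transform, the off-diagonal contribution comes out far too large (of order $T^{5+\epsilon}$ or worse in this problem).

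What the paper actually does after opening the square and applying Kuznetsov is a genuinely three-dimensional piece of analysis: the transform $\breve h(z)$ depends on both $|z|$ and the angle $\vartheta=\arg z$ (because $H_{ir}(z)=2^{-2ir}|z|^{2ir}J^*_{ir}(z)J^*_{ir}(\bar z)$ couples $z$ and $\bar z$), and one must prove and exploit angular decay estimates of the shape
\[
\mathring h(z)\ll T^{1+\epsilon}|\cos\vartheta|^{-1}\quad\text{or}\quad \mathring h(z)\ll T^{-A}
\]
in suitable regimes of $|z|$ and $|\cos\vartheta|$, obtained by repeated integration by parts in an integral representation. The Gaussian-integer sum over $c$ is then split according to the angle of $k_1k_2/c$, and the saving comes from counting lattice points in thin sectors. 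None of this has a two-dimensional analogue, and it is the entire content of the theorem. Your closing paragraph correctly flags step~(iii) as the obstacle, but the resolution is not a citation or a routine extension of the $\mathbb{H}^2$ argument; it is this angular analysis, which you would have to supply.
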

We can then use the Cauchy--Schwarz inequality,
the convexity bound $\zeta_{K}(w)\ll |w|^{1/2+\epsilon}$, and the upper
bound $|v_{j}(1)|\ll r_{j}^{\epsilon}$
(see~\cite{hoffstein_coefficients_1994} and~\cite[Proposition~3.1]{koyama_prime_2001})
to deduce the following corollary.
\begin{corollary}\label{cor:subconvexity}
    Let $u_{j}$ be a Maass--Hecke cusp form and suppose $\Re w=\frac{1}{2}$. Then
    \[
	\sum_{r_{j}\leq T}\frac{r_{j}}{\sinh\pi r_{j}}|L(w,u_{j}\otimes u_{j})|
	\ll
	|w|^{A} T^{7/2+\epsilon},
	\]
	for some positive constant $A$, and for arbitrarily small $\epsilon>0$.
\end{corollary}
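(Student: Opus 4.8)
The plan is to obtain Corollary~\ref{cor:subconvexity} as a direct consequence of Theorem~\ref{thm:subconvexitysym} together with the Weyl law~\eqref{weyl-law}, so this is a short deduction rather than a genuinely new argument. First I would solve the defining relation~\eqref{eq:defsym} for the Rankin--Selberg convolution, obtaining
\[
\frac{r_{j}}{\sinh\pi r_{j}}\,\bigl|L(w,u_{j}\otimes u_{j})\bigr|
=
\left|\frac{\zeta_{K}(w)}{\zeta_{K}(2w)}\right|\,
\bigl|L^{(2)}(w,u_{j})\bigr|\,|v_{j}(1)|^{2}.
\]
On the line $\Re w=\tfrac12$ one has $\Re(2w)=1$, so I would bound the numerator by the convexity estimate $\zeta_{K}(w)\ll|w|^{1/2+\eps}$ and the denominator from below using the classical non-vanishing of $\zeta_{K}$ on the line $\Re s=1$, which gives $|\zeta_{K}(2w)|^{-1}\ll\log(2+|w|)\ll|w|^{\eps}$. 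Together with the known bound $|v_{j}(1)|\ll r_{j}^{\eps}$ from~\cite{hoffstein_coefficients_1994,koyama_prime_2001}, this reduces the problem to estimating $\sum_{r_{j}\leq T}\bigl|L^{(2)}(w,u_{j})\bigr|$, up to a harmless factor $|w|^{1/2+\eps}T^{\eps}$.

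Next I would dissect the range $0<r_{j}\leq T$ dyadically into $O(\log T)$ blocks $r_{j}\sim T'$ with $T'=T,T/2,T/4,\dots$, discarding the bounded range $r_{j}\leq 1$ which is negligible. On a single block, Cauchy--Schwarz gives
\[
\sum_{r_{j}\sim T'}\bigl|L^{(2)}(w,u_{j})\bigr|
\leq
\Bigl(\#\{r_{j}\sim T'\}\Bigr)^{1/2}
\Bigl(\sum_{r_{j}\sim T'}\bigl|L^{(2)}(w,u_{j})\bigr|^{2}\Bigr)^{1/2},
\]
and I would then insert $\#\{r_{j}\sim T'\}\ll(T')^{3}$ from~\eqref{weyl-law} and the second-moment bound $\ll|w|^{A}(T')^{4+\eps}$ from Theorem~\ref{thm:subconvexitysym}. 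Each block therefore contributes $\ll|w|^{A/2}(T')^{7/2+\eps}$; summing the resulting geometric series in $T'$, which is dominated by the top term $T'=T$, and reincorporating the factor $|w|^{1/2+\eps}T^{\eps}$, I arrive at $\sum_{r_{j}\leq T}\tfrac{r_{j}}{\sinh\pi r_{j}}\bigl|L(w,u_{j}\otimes u_{j})\bigr|\ll|w|^{A/2+1/2+\eps}T^{7/2+\eps}$, which is the claimed estimate after renaming the exponent of $|w|$.

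Since all of the harmonic-analytic content is concentrated in Theorem~\ref{thm:subconvexitysym}, which I am assuming, there is no substantial obstacle in the corollary itself. The only slightly non-formal ingredients are the polynomial (in fact logarithmic) lower bound for $\zeta_{K}$ on $\Re s=1$ and the control of the $|v_{j}(1)|^{-2}$ normalisation, both of which are standard; the one point requiring a little care is to verify that the dependence on $w$ coming from the convexity bound and from $1/\zeta_{K}(2w)$ is uniform and polynomial, so that it can legitimately be absorbed into the single constant $A$ of the statement.
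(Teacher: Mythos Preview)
Your proposal is correct and follows essentially the same approach as the paper, which sketches the argument in one sentence before the corollary: use the relation~\eqref{eq:defsym}, the convexity bound $\zeta_{K}(w)\ll|w|^{1/2+\eps}$, the bound $|v_{j}(1)|\ll r_{j}^{\eps}$, and Cauchy--Schwarz combined with Theorem~\ref{thm:subconvexitysym}. You have simply spelled out the details the paper leaves implicit, in particular the dyadic decomposition (needed because Theorem~\ref{thm:subconvexitysym} is stated for $r_{j}\sim T$) and the treatment of $1/\zeta_{K}(2w)$ on the line $\Re s=1$.
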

The convexity bound in the spectral aspect would be $T^{4+\epsilon}$,
while Lindel\"of hypothesis would give $T^{3+\epsilon}$. Therefore
our theorem takes us halfway towards the goal.
In~\cite{koyama_prime_2001} Koyama assumes the above theorem with the
bound $O(|w|^A T^{3+\epsilon})$.
It should be noted that, following our proofs,
a bound of the form $O(|w|^A T^{3+\alpha})$ in Corollary~\ref{cor:subconvexity}, with $0<\alpha<1$,
would give the inequalities
\[
    S(T,X) \ll T^{(7+2\alpha)/4+\epsilon}X^{1/4+\epsilon} +T^{2},
\]
and
\[
E_\Gamma(X) \ll X^{\frac{11+4\alpha}{7+2\alpha}+\epsilon}.
\]
Therefore any improvement on Corollary~\ref{cor:subconvexity} would imply a refinement of our Theorem~\ref{intro:thm01}
and Theorem~\ref{1610:thm01}. Clearly, on taking $\alpha=0$ one recovers the conditional exponent $11/7$
of Koyama~\cite{koyama_prime_2001} for $E_\Gamma(X)$.

We prove Theorem~\ref{thm:subconvexitysym} by following the argument that appears
for $\mathbb{H}^{2}$ in~\cite[pp.~219--222]{luo_quantum_1995} with some modifications.
\begin{proof}[Proof of Theorem~\ref{thm:subconvexitysym}]
    The symmetric square $L$-function $L^{(2)}(s,u_{j})$ is an
    entire function (see~\cite{shimura_1975})
    with Gamma factors
    \[
    \gamma(s, r_{j})=\pi^{-3s}\Gamma(s)\Gamma(s+ir_{j})\Gamma(s-ir_{j}),
    \]
    and the functional equation
    \begin{equation}\label{eq:symfunc}
        L^{(2)}(s,u_{j})\gamma(s,r_{j})=L^{(2)}(1-s,u_{j})\gamma(1-s,r_{j}).
    \end{equation}
    Let $w=\tfrac{1}{2}+it_{0}$. Consider the integral
    \[I_{1}=\frac{1}{2\pi i}\int_{(\sigma)}L^{(2)}(s+w,
        u_{j})\Gamma(s+l)x^{-2s}\frac{ds}{s},\]
    where $l>4$ is an integer paramater to be chosen later,
    $\sigma=\frac{1}{2}+\frac{1}{\log r_{j}}$ and $x>0$.
    Since we are in the region of absolute convergence, we can write
    \[I_{1}=\sum_{n\in\zin}\frac{c_{j}(n)}{N(n)^{w}}F(N(n)x^{2}),\]
    where $F(u)$ is the incomplete Gamma function given by
    \[F(u)=\frac{1}{2\pi i}\int_{(\sigma)}\Gamma(s+l)u^{-s}\frac{ds}{s}.\]
    The integrand in $I_{1}$ has a simple pole at $s=0$ with residue
    $\Gamma(l)L^{(2)}(w,u_{j})$. We shift the contour
    to $(-\sigma)$ and hence by Cauchy's theorem we obtain
    \begin{equation}\label{eq:preapprox}
        I_{1}=\Gamma(l)L^{(2)}(w,u_{j})+\frac{1}{2\pi
            i}\int_{(-\sigma)}L^{(2)}(s+w,u_{j})\Gamma(s+l)x^{-2s}\frac{ds}{s}.
    \end{equation}
    Denote the integral on the right-hand side by $I_{2}$. By the functional
    equation~\eqref{eq:symfunc} we can rewrite the integral as
    \[
    I_{2}=
    \frac{-1}{2\pi i}\int_{(\sigma)}
    L^{(2)}(s+\wbar,u_{j})
    \frac{\gamma(s+\wbar,r_{j})}{\gamma(-s+w,r_{j})}
    \Gamma(-s+l)x^{2s}\frac{ds}{s}.
    \]
    We use Stirling's formula in the holomorphic
    form~\cite[8.327~(1)]{gradshteyn2007} to estimate
    the spectral Gamma factors as
    \[
    \frac{
         \Gamma(s+\wbar+ir_{j})\Gamma(s+\wbar-ir_{j})
         }{
         \Gamma(-s+\wbar+ir_{j})\Gamma(-s+\wbar-ir_{j})
         }
    =
    r_{j}^{4s}r_{j}^{-4it_{0}}\pi^{-3(2s+\wbar-w)}
    (1+O(|s+\wbar|^{4+\delta}r_{j}^{-1})).
    \]
    To estimate the integral over the error term we use the bound
    $L^{(2)}(s+\wbar,u_{j})\ll r_{j}^{\delta}$, for any $\delta>0$, which
    follows from the mean Ramanujan
    bound of Koyama~\cite[Theorem~2.1]{koyama_supnorm_1995,koyama_supnorm_2016} (notice that, due to different
    normalisation of the Hecke operators, his $\eta_{j}(n)$ is our
    $\lambda_{j}(n)$):
    \[\sum_{N(n)<N}|\lambda_{j}(n)|^{2}=O((1+|r_{j}|^{\epsilon})N).\]
    Therefore, $I_{2}$ becomes
    \[I_{2}=-r_{j}^{-4it_{0}}\pi^{6it_{0}}\sum_{n\in\zin}\frac{c_{j}(n)}{N(n)^{\wbar}}F\left(w,
            \frac{N(n)\pi^{6}}{x^{2}r_{j}^{4}}\right)+O(|w|^{5+\delta}x^{2\sigma}r_{j}^{1+\delta}),\]
    where
    \[F(w,u) = \frac{1}{2\pi
            i}\int_{(\sigma)}\Gamma(-s+l)\frac{\Gamma(s+\wbar)}{\Gamma(-s+w)}u^{-s}\frac{ds}{s}.\]
    Substituting this back into~\eqref{eq:preapprox} gives a type of
    asymmetric approximate functional equation,
    \begin{multline*}
        \Gamma(l)L^{(2)}(w,u_{j})=\sum_{n\in\zin}\frac{c_{j}(n)}{N(n)^{w}}F(N(n)x^{2})\\+
        r_{j}^{-4it_{0}}\pi^{6it_{0}}\sum_{n\in\zin}\frac{c_{j}(n)}{N(n)^{\wbar}}F\left(w,\frac{N(n)\pi^{6}}{x^{2}r_{j}^{4}}\right)
        +O(|w|^{5+\delta}x^{2\sigma}r_{j}^{1+\delta}).
    \end{multline*}
    Now integrate the above equation from $\pi^{3/2}/r_{j}$ to $e\pi^{3/2}/r_{j}$ with respect to
    the logarithmic measure $\frac{dx}{x}$. This gives
    \begin{multline*}
        \Gamma(l)L^{(2)}(w,u_{j})=\int_{\pi^{3/2}/r_{j}}^{e\pi^{3/2}/r_{j}}\sum_{n\in\zin}\frac{c_{j}(n)}{N(n)^{w}}F(N(n)x^{2})\frac{dx}{x}\\
        +r_{j}^{-4it_{0}}\pi^{6it_{0}}\int_{\pi^{3/2}/r_{j}}^{e\pi^{3/2}/r_{j}}\sum_{n\in\zin}\frac{c_{j}(n)}{N(n)^{\wbar}}F\left(w,\frac{N(n)\pi^{6}}{x^{2}r_{j}^{4}}\right)\frac{dx}{x}
        +O(|w|^{5+\delta}T^{\delta}),
    \end{multline*}
    as long as we assume that $r_{j}\sim T$. We then do the following change of
    variables: in the first integral let $x^{2}=y^{2}\pi^{3}/T^{2}$, whereas in the
    second integral we let $\pi^{3}/x^{2}r_{j}^{4}=y^{2}/T^{2}$. Thus we obtain
    \begin{multline*}
        \Gamma(l)L^{(2)}(w,u_{j})=\int_{T/r_{j}}^{eT/r_{j}}\sum_{n\in\zin}\frac{c_{j}(n)}{N(n)^{w}}F\left(N(n)\frac{y^{2}\pi^{3}}{T^{2}}\right)\frac{dy}{y}\\
        +r_{j}^{-4it_{0}}\pi^{6it_{0}}\int_{T/er_{j}}^{T/r_{j}}\sum_{n\in\zin}\frac{c_{j}(n)}{N(n)^{\wbar}}F\left(w,\frac{N(n)y^{2}\pi^{3}}{T^{2}}\right)\frac{dy}{y}
        +O(|w|^{5+\delta}T^{\delta}).
    \end{multline*}
    Squaring and summing over $r_{j}\sim T$ gives
    \begin{equation}\label{eq:splitsum}
    \begin{split}
        &\sum_{r_{j}\sim T} \left|L^{(2)}(w,u_{j})\right|^{2}
		\ll
        \int_{1/2}^{e}\sum_{r_{j}\sim
            T}\left|\sum_{n\in\zin}\frac{c_{j}(n)}{N(n)^{w}}F\left(\frac{N(n)y^{2}\pi^{3}}{T^{2}}\right)\right|^{2}\frac{dy}{y}
        \\
        &+
        \int_{1/2e}^{1}\sum_{r_{j}\sim
            T}\left|\sum_{n\in\zin}\frac{c_{j}(n)}{N(n)^{\wbar}}F\left(w,\frac{N(n)y^{2}\pi^{3}}{T^{2}}\right)\right|^{2}\frac{dy}{y}+O(|w|^{10+\delta}T^{3+\delta}).
    \end{split}
    \end{equation}
    We split the inner sums in~\eqref{eq:splitsum} at
    height $T^{2+\epsilon}$ and $(T|w|)^{2+\epsilon}$, respectively.
    Let $u>0$. For the incomplete Gamma function we use the standard estimate
    \begin{equation}\label{2411:eq001}
    F(u)\ll u^{l-1}e^{-u},
    \end{equation}
    see~\cite[8.357]{gradshteyn2007}.
    For $F(w,u)$ we shift the line of integration to $(l-1/2)$, obtaining
    $F(w,u)\ll(u/|w|^{2})^{-l+(1/2)}$, where the constant
    depends on $l$. Thus if $u\gg |w|^2T^{\epsilon}$, then by picking
    $l=\left[\frac{M}{\epsilon}\right]+4$, for some positive integer $M$, we obtain
    \[
    F(w,u)\ll T^{-M}\left(|w|^{2}/u\right)^{3/2+\delta}.
    \]
    Observe now that trivially $\lambda_{j}(n)\ll\sigma_{1}(n)/\sqrt{N(n)}$, where
    $\sigma_{1}(n)$ is as in the statement of Theorem~\ref{Kuznetsovformula}.
    It follows that $c_{j}(n)/N(n)^{\wbar}\ll N(n)^{1/2+\delta}$.
    Therefore, if we pick e.g.~$M=7$, we obtain immediately
    \begin{equation}\label{2411:eq002}
    \begin{aligned}
        \sum_{N(n)\gg
            (T|w|)^{2+\epsilon}}\frac{c_{j}(n)}{N(n)^{\wbar}}F\left(w,\frac{N(n)y^{2}\pi^{3}}{T^{2}}\right)&\ll|w|^{3+\delta},\\
        \sum_{N(n)\gg
            T^{2+\epsilon}}\frac{c_{j}(n)}{N(n)^{w}}F\left(\frac{N(n)y^{2}\pi^{3}}{T^{2}}\right)&\ll1.
    \end{aligned}
    \end{equation}
    Consider now the finite part of the sum in~\eqref{eq:splitsum}.
    In contrast to the proof in Luo and Sarnak~\cite[pg.~222]{luo_quantum_1995}, applying the best known large sieve in $\mathbb{H}^3$
	(see Watt~\cite[Theorem~1]{watt_spectral_2014}) is not helpful. Instead we proceed as follows.
	For convenience we introduce a weight function
    \[
    h(r_j) = \exp\left(-\frac{(r_j-T)^2}{M^2}\right) +
    \exp\left(-\frac{(r_j+T)^2}{M^2}\right),
    \]
    with $M=T^{1-\epsilon}$, so that the sums in $r_j$ are unchanged (by positivity) up to a factor $O(T^\epsilon)$.
    Then we open the square and recall the definition $c_j(n)=\sum_{kl^2=n} \lambda_j(k^2)$
    to expand the sums. Thus for the first line in~\eqref{eq:splitsum} we get
    \begin{equation}\label{0108:eq001}
    \begin{gathered}
    \sum_{N(l_1^2k_1)\ll T^{2+\epsilon}\atop N(l_2^2k_2)\ll T^{2+\epsilon}} \frac{1}{N(l_1^2k_1)^w N(l_2^2k_2)^{\bar w}}
		F\left(\frac{N(l_1^2k_1)y^{2}\pi^{3}}{T^{2}}\right)
		F\left(\frac{N(l_2^2k_2)y^{2}\pi^{3}}{T^{2}}\right)
		\\
		\times
		\sum_{r_j} \lambda_j(k_1^2)\lambda_j(k_2^2) h(r_j).
    \end{gathered}
    \end{equation}
	The second sum in~\eqref{eq:splitsum} can be treated similarly so we omit
    the analysis here.
	Since the cutoff function $F(x)$ is $O(1)$ in the range we are considering, we can estimate~\eqref{0108:eq001} by
	\begin{gather}
	\sum_{N(l), N(l_2)\ll T^{1+\epsilon}} \frac{1}{N(l_1l_2)}
	\sum_{N(k_1), N(k_2)\ll T^{2+\epsilon}} \frac{1}{N(k_1k_2)^{1/2}}
		\sum_{r_j} \lambda_j(k_1^2)\lambda_j(k_2^2) h(r_j)
	\nonumber
	\\
	\ll
	T^\epsilon\sum_{N(k_1), N(k_2)\ll T^{2+\epsilon}} \frac{1}{N(k_1k_2)^{1/2}}
		\Big|\sum_{r_j} \lambda_j(k_1^2)\lambda_j(k_2^2) h(r_j)\Big|, \label{0108:eq002}
	\end{gather}
	and the problem is reduced to estimating the sum over $r_j$.
	For this we apply the Kuznetsov formula, so that~\eqref{0108:eq002} is transformed into a sum of Kloosterman sums, namely
	\[
	\sum_{N(k_1),N(k_2)\ll T^{2+\epsilon}} \frac{1}{N(k_1k_2)^{1/2}}
    \left\lvert\sum_{c\in\mathbb{Z}\backslash\{0\}} \frac{S(\bar{k}_1^2,\bar{k}_2^2,c)}{N(c)} \breve{h}\left(\frac{2\pi
    k_1k_2}{c}\right)\right\rvert,
	\]
    where $\breve{h}(z)$ is the Bessel transform (as given in the $S$-term in
    Theorem~\ref{Kuznetsovformula}).
	It is not difficult to show that the continuous contribution and the term associated to $\delta_{m=\pm n}$ appearing in the Kuznetsov formula
	are bounded by $O(T^{3+\epsilon})$.
	To treat the Kloosterman sums we apply Weil's bound~\eqref{s02:weil}, and we estimate carefully the function $\breve{h}(z)$.
    As a first step we use an integral representation for the function $\breve{h}(z)$ (see~\cite[(2.10)]{motohashi_trace_1997}), writing
	\[
	\breve{h}(z) = \frac{4i}{\pi^2} \int_0^{\pi/2}\cos(2|z|\cos\vartheta\sin\tau)I(2|z|\cos\tau)d\tau,
	\]
	where $z=|z|e^{i\vartheta}$ and
	\begin{equation}\label{Idef}
	I(x) = 	\int_{-\infty}^{\infty} r^2 h(r) \cosh(\pi r) K_{2ir}(x) dr.
	\end{equation}
	The analysis of the integral $I(x)$ can be performed by following the work
    of Li (see~\cite[\S5]{li_bounds_2011}).
	First we have, for $A>1$,
	\[
	I(x) \ll \exp(x) T^{3+\epsilon} (x/T)^A,
	\]
	which implies that in the range where $x\ll 1$, we can estimate
	\[
	\sum_{N(k_1),N(k_2)\ll T^{2+\epsilon}} N(k_1k_2)^{\epsilon}
    \sum_{N(c)\gg N(k_1k_2)} \frac{d(c)|(\bar{k}_1^2,\bar{k}_2^2,c)|}{N(c)^{1+\epsilon}} \; T^{-A} \ll 1.
	\]
	This shows that the tail of the sum in $c$ is negligible.
	For the rest of the sum we evaluate $I(x)$ by showing that the total mass
    comes from a neighbourhood of $|x|=T$.
	More precisely, we have
	\begin{equation}\label{Iestimate1}
	I(x) \ll T^{-A}, \qquad\text{for}\; |x|\leq \frac{T}{100}\;\text{or}\; |x|\geq 100 T,
	\end{equation}
	and
	\begin{equation}\label{Iestimate2}
	I(x) = L \, x^2 \exp\left(-\frac{(x-2T)^2}{4M^2}\right) + O\left(\frac{T^2x}{M^3}\right)\quad\text{otherwise},
	\end{equation}
	for some absolute constant $L$. Note that~\eqref{Iestimate1} and~\eqref{Iestimate2}
	can be proved almost verbatim as in~\cite[Proposition~5.1]{li_bounds_2011}.
	The error in~\eqref{Iestimate2} contributes,
	recalling~\eqref{0608:eq001}, at most $O(T^{3+\epsilon})$.
	By using~\eqref{Iestimate1} we can further reduce the sum over $c$: if
    $N(c)\gg N(k_1k_2)/T^2$ then we obtain again
	a contribution of $O(1)$.
	We therefore need to prove the estimate
	\begin{equation}\label{0108:eq003}
	\sum_{N(k_1)\ll T^{2+\epsilon}\atop N(k_2)\ll T^{2+\epsilon}} \frac{1}{N(k_1k_2)^{1/2}}
		\!\! \sum_{N(c)\ll\frac{N(k_1k_2)}{T^2}}
        \frac{d(c)\bigl\lvert(\bar{k}_1^2,\bar{k}_2^2,c)\bigr\rvert}{N(c)^{1/2}}
		\Big|\mathring{h}\left(\frac{2\pi k_1k_2}{c}\right)\Big|
	\ll
	T^{4+\epsilon},
	\end{equation}
	where $\mathring{h}(z)$ is the function
	\[
	\mathring{h}(z) = \int_0^{\pi/2}\cos(2|z|\cos\vartheta\sin\tau)g(\cos\tau)G(|z|\cos\tau) d\tau,
	\]
    with
	\[
	G(x) = x^2 \exp\left(-\frac{(x-T)^2}{M^2}\right),
	\]
	and $g(x)$ is a smooth characteristic function of the interval $[c_1T/|z|,c_2T/|z|]$ (for some constants $0<c_1<1<c_2$),
	such that $g^{(j)}(x) \ll (T/|z|)^{-j}$.
	We claim that
	\begin{equation}\label{h-estimate}
	\mathring{h}(z)
	\ll
	\begin{cases}
        T^3|z|^{-1}, & \text{if }|z|\gg T,\\
    T^{1+\epsilon} |\cos\vartheta|^{-1}, &\text{if } T\ll|z|\ll T^{1+\epsilon}
    \text{ and } |\cos\vartheta|\gg T^{-1+3\epsilon},
	\\
    T^{-A}, & \text{if }|z|\gg T^{1+\epsilon} \text{ and } |\cos\vartheta|\gg
    |z| T^{-2+3\epsilon}.
	\end{cases}
	\end{equation}
	The first estimate follows by bounding in absolute value $g(\cos\tau)G(|z|\cos\tau)\ll T^2$,
	and the fact that the support of $g$ is $[c_1T/|z|,c_2T/|z|]$.
	Integrating by parts once in $\tau$ (and noting that the exponential is piecewise monotonic,
	so that the derivative is piecewise of constant sign) leads to the second bound.
	Finally, to prove the third bound we integrate by parts multiple times. The first integration gives
	\[
	\mathring{h}(z)
	=
	-\int_0^{\pi/2} \sin(2|z|\cos\vartheta\sin\tau) \frac{d}{d\tau}\left(\frac{g(\cos\tau)G(|z|\cos\tau)}{2|z|\cos\vartheta\cos\tau}\right) d\tau.
	\]
	Notice that $G'(x)\approx G(x)/x$ and that $g$ vanishes at the endpoints.
    After expanding the derivative, we see that the largest contribution
	comes from the integral
	\[
	\int_0^{\pi/2} \sin(2|z|\cos\vartheta\sin\tau) g(\cos\tau) \frac{G(|z|\cos\tau)\sin\tau}{2|z|\cos\vartheta\cos^2\tau} d\tau.
	\]
	Integrating by parts $k$-times we thus obtain
	\[
	\mathring{h}(z)\ll
	T^2 \int_0^{\pi/2} g(\cos\tau) \left( \frac{\sin\tau}{|z|\cos\vartheta\cos^2\tau} \right)^k d\tau
	\\
	\ll
	T^2 \left( \frac{|z|}{T^2\cos\vartheta}	\right)^k
	\ll
	T^{-A}.
	\]
	This proves the last bound in~\eqref{h-estimate}.

	Let us now return to proving the estimate~\eqref{0108:eq003}.
    To treat the sum of gcd's, we proceed as follows.
	For $u\in\mathbb{Z}[i]$ write $u=u_1u_2^2$ with $u_1$ squarefree, and complete the square, defining $u_+=u_1^2u_2^2$.
	Note in particular that $|u_+|\geq |u|$.
	We can bound the left-hand side in~\eqref{0108:eq003}, by splitting the sum
    into congruence classes, by
    \begin{equation}\label{eq:congclasses}
        \sum_{N(u) \ll T^{2+\epsilon}} |u|
        \sum_{k_1^2, k_2^2\equiv 0\;(u)} \frac{1}{N(k_1k_2)^{1/2}}
        \sum_{c\equiv 0\;(u)} \frac{1}{N(c)^{1/2-\epsilon}} \Big|\mathring{h}\left(\frac{2\pi k_1k_2}{c}\right)\Big|,
    \end{equation}
	where we use the fact that the variables of summation, $k_1,k_2,c$, are
    restricted to the ranges
	\[
	N(k_1),N(k_2)\ll T^{2+\epsilon}, \quad N(c) \ll \frac{N(k_1k_2)}{T^2}.
	\]
	After relaxing the condition $k_j^2\equiv 0$ (mod $u$) to $k_j\equiv 0$ (mod $\sqrt{u_+}$),
	and changing variables $k_j\mapsto\sqrt{u_+}k_j$ and $c\mapsto uc$,
    we can bound~\eqref{eq:congclasses} by
	\[
	\sum_{N(u) \ll T^{2+\epsilon}} \frac{1}{|u_+|}
		\sum_{ N(k_j) \ll \frac{ T^{2+\epsilon} }{ N(u_+)^{1/2} } } \!\!\! \frac{1}{N(k_1k_2)^{1/2}}
			\!\!\!\!\!\! \sum_{N(c)\ll \frac{N(u_+k_1k_2)}{T^2N(u)}} \frac{1}{N(c)^{1/2-\epsilon}} \Big|\mathring{h}\left(\frac{2\pi u_+k_1k_2}{uc}\right)\Big|.
	\]
	We perform a dyadic partition of unity for $k_1,k_2$, and $c$, so that we can restrict our attention
	to sums with
	\[
	|k_1|\sim K_1,\quad |k_2|\sim K_2,\quad |c|\sim Q.
	\]
	We distinguish three cases according to whether the quantity $|u_+K_1K_2/uQ|$ is in the interval $[T,T^{1+\epsilon}]$,
	$[T^{1+\epsilon},T^{3/2+\epsilon}]$, or $[T^{3/2+\epsilon},T^{2+\epsilon}]$, in order to exploit the different bounds in~\eqref{h-estimate}.
	Solving for $Q$, we set $Q_*=K_1K_2|u_+|/T|u|$, and we define
	\[
	Q_1 = Q_*,
	\quad
	Q_2 = Q_* T^{-\epsilon},
	\quad
	Q_3 = Q_* T^{-1/2-\epsilon},
	\quad
	Q_4 = Q_* T^{-1-\epsilon},
	\]
	so that we need to discuss separately $Q_{s+1}\ll Q\ll Q_s$, for $s=1,2,3$.
	Let us consider the first case, $Q_2\ll Q\ll Q_1$. To estimate
    $|\mathring{h}(2\pi u_+k_1k_2/uc)|$, we apply
	the second bound in~\eqref{h-estimate} if $|\cos\vartheta|\gg T^{-1+3\epsilon}$, and the first bound otherwise.
	Hence we obtain
	\begin{equation}\label{1caseestimate1}
	\begin{split}
	\sum_{|u|\leq T^{1+\epsilon}}\frac{|u|^\epsilon}{|u_+|}
	&	\sum_{K_j\leq \frac{T^{1+\epsilon}}{\sqrt{|u_+|}}}
			\sum_{Q_2\ll Q\ll Q_1} \frac{1}{K_1K_2Q}
	\\
	&\times
	\sum_{|k_j|\sim K_j}
		\bigg(
		\sum_{\substack{|c|\sim Q\\|\cos\vartheta|\gg T^{-1+3\epsilon}}}\frac{T^{1+\epsilon}}{|\cos\vartheta|}+
		\sum_{\substack{|c|\sim Q\\|\cos\vartheta|\ll T^{-1+3\epsilon}}}T^2
		\bigg).
	\end{split}
	\end{equation}
	The first summand in parentheses can be bounded by
	\[
	\sum_{\substack{|q|\sim Q\\|\cos\vartheta|\gg T^{-1+3\epsilon}}}\frac{T^{1+\epsilon}}{|\cos\vartheta|}
	\ll
	\int_{Q}^{2Q} r
		\int_{\hspace{-14pt}\raisebox{-10pt}{\ensuremath{\vartheta\in[0,2\pi]:\atop|\cos\vartheta|\gg T^{-1+3\epsilon}}}}
			\frac{T^{1+\epsilon}}{|\cos\vartheta|}d\vartheta dr
	\ll
	Q^2T^{1+\epsilon}.
	\]
	Analogously, the second summand can be bounded by
	\[
	\sum_{\substack{|c|\sim Q\atop |\cos\vartheta|\ll T^{-1+3\epsilon}}}T^2\ll
	T^2\left(Q^2T^{-1+3\epsilon}+Q\right),
	\]
    which corresponds to counting the number of integral points in a sector (on
    a plane)
    of width $T^{-1+3\epsilon}$
	(and can be estimated by the area of the sector plus the length of its boundary).
	Combining the two estimates above we deduce that \eqref{1caseestimate1}~can be bounded by
	\begin{equation}\label{1casefinalestimate}
	\begin{split}
	&\sum_{|u|\leq T^{1+\epsilon}}\frac{|u|^\epsilon}{|u_+|}
		\sum_{K_j\leq \frac{T^{1+\epsilon}}{\sqrt{|u_+|}}}
			\sum_{Q_2\ll Q\ll Q_1} K_1 K_2 (QT^{1+3\epsilon}+T^2)
	\\
	&\ll
	\!\!\!\! \sum_{|u|\leq T^{1+\epsilon}} \! \frac{|u|^\epsilon}{|u_+|} \!
		\sum_{K_j\leq \frac{T^{1+\epsilon}}{\sqrt{|u_+|}}} \!\!\!\! K_1K_2 (Q_1T^{1+3\epsilon} + T^2)
	\ll
	\!\!\!\! \sum_{|u_+|\leq T^{1+\epsilon}} \! \frac{T^{4+10\epsilon}}{|u_+|^2}
	\ll
	T^{4+11\epsilon}.
	\end{split}
	\end{equation}
	Let us consider now the second case, $Q_3\ll Q\ll Q_2$. The portion of the
    sum, where
	$|\cos\vartheta|\gg Q_*/QT^{1-3\epsilon}$, is negligible in view of the last bound in~\eqref{h-estimate}.
	If, on the other hand, $|\cos\vartheta|\ll Q_*/QT^{1-3\epsilon}$, then we apply the first bound in~\eqref{h-estimate},
	and we get
	\begin{equation}\label{2caseestimate1}
	\begin{split}
	\sum_{0<|u|\le T^{1+\epsilon}} \frac{|u|^\epsilon}{|u_+|}
		\sum_{K_j\leq \frac{T^{1+\epsilon}}{\sqrt{|u_{+}|}}}
			\sum_{Q_3\ll Q\ll Q_2} \frac{1}{K_1K_2Q}
				\sum_{|k_j|\sim K_j}
					\sum_{|c|\sim Q\atop |\cos\vartheta|\ll\frac{Q_*}{QT^{1-3\epsilon}} }
					\frac{QT^2}{Q_*}.
	\end{split}
	\end{equation}
	Arguing in the same way as in~\eqref{1caseestimate1}--\eqref{1casefinalestimate}
	to estimate the inner sum, we obtain $O(T^{4+\epsilon})$ also in this case.
	Finally, we consider the case $Q_4\ll Q\ll Q_3$. We apply the first bound in~\eqref{h-estimate},
	and we obtain the sum
	\begin{equation}\label{3caseestimate1}
	\begin{split}
	\sum_{0<|u|\le T^{1+\epsilon}}
	&\frac{|u|^\epsilon}{|u_+|}
		\sum_{K_j\leq \frac{T^{1+\epsilon}}{\sqrt{|u_{+}|}}}
			\sum_{Q_4\ll Q\ll Q_3} \frac{1}{K_1K_2Q}
				\sum_{|k_j|\sim K_j}
					\sum_{|c|\sim Q} \frac{QT^2}{Q_*}
	\\
	&\ll
	\sum_{0<|u|\le T^{1+\epsilon}} \frac{|u|^{1+\epsilon}}{|u_+|^2}
		\sum_{K_j\leq \frac{T^{1+\epsilon}}{\sqrt{|u_+|}}}
			\sum_{Q_4\ll Q\ll Q_3} T^3Q^2
	\\
	&\phantom{xxxxxxxx}\ll
	\sum_{0<|u|\le T^{1+\epsilon}}\frac{|u|^{1+\epsilon}}{|u_+|^2}
		\sum_{K_j\leq \frac{T^{1+\epsilon}}{\sqrt{|u_+|}}}
		\frac{K_1^2K_2^2|u_+|^2}{T^{2\epsilon}|u|^2}
	\ll
	T^{4+\epsilon}.
	\end{split}
	\end{equation}
	This proves~\eqref{0108:eq003}, and concludes the proof of Theorem~\ref{thm:subconvexitysym}.
\end{proof}

We  can now prove Theorem~\ref{1610:thm01}.
Our proof follows closely the method in~\cite{koyama_prime_2001}
(see also~\cite[\S8]{iwaniec_prime_1984}, \cite[\S6]{luo_quantum_1995}).

\begin{proof}[Proof of Theorem~\ref{1610:thm01}]
    Let $f$ be a smooth function supported in $[\sqrt{N}, \sqrt{2N}]$
    satisfying $|f^{(p)}(\xi)| \ll N^{-p/2}$ for $p\geq 0$ and
    \begin{eqnarray*}
        \int_{0}^{\infty} f(\xi) \xi\, d \xi =N.
    \end{eqnarray*}
    Then, applying Corollary~\ref{cor:subconvexity} and following the proof
    of~\cite[Lemma~4.3]{koyama_prime_2001} we deduce that
    \begin{eqnarray}
        \sum_{n \in\zin} \frac{r_j f(|n|) |\rho_j(n)|^2}{\sinh \pi r_j} = c N +
        \mathfrak{r}(r_j, N),
    \end{eqnarray}
    where $c>0$ is some constant and
    \begin{eqnarray}
        \sum_{|r_j| \leq T} |\mathfrak{r}(r_j, N)| \ll T^{7/2+\epsilon} N^{1/2}.
    \end{eqnarray}
    We conclude that
    \begin{multline}\label{eq:specsum}
        \frac{1}{N} \sum_{n \in\zin} f(|n|) \sum_{|r_j| \leq T}  \frac{r_j |\rho_j(n)|^2}{\sinh \pi r_j} X^{ir_j} \exp(-r_j/T) \\
        = c  \sum_{|r_j| \leq T}  X^{ir_j} \exp(-r_j/T) + O \left(T^{7/2+\epsilon} N^{-1/2}\right).
    \end{multline}
    We apply the Kuznetsov formula (Theorem~\ref{Kuznetsovformula}) to the inner
    spectral sum on the left-hand side with the test function
    \[h(r) = \frac{\sinh (\pi + 2i \alpha)r}{\sinh \pi r},\]
    where $2 \alpha = \log X +i/T$. For $r>0$, we get
    \[h(r) = X^{ir}e^{-r/T} + O(e^{-\pi r}).\]
    The contribution of each term ($D, C, U$ and $S$) in the Kuznetsov formula can be
    computed separately. For the test function $h(r)$ chosen above,
    Koyama~\cite[p.~789]{koyama_prime_2001} proves that
    \[ |C|+|U|\ll T^{2}.\]
    For the sum in $S$, we apply estimates for Bessel functions and Weil's bound
    \eqref{s02:weil} (see~\cite[pp.~789--792]{koyama_prime_2001} for details).
    Following~\cite[pp.~790--791]{koyama_prime_2001} we obtain
    \begin{eqnarray}
        S \ll N^{1/2+ \epsilon} T^{1/2+\epsilon} X^{1/2}.
    \end{eqnarray}
    Hence~\eqref{eq:specsum} gives
    \[\sum_{r_j} X^{ir_j}e^{-r_j/T} \ll N^{1/2+ \epsilon} T^{1/2+\epsilon}
        X^{1/2} + T^{7/2+ \epsilon} N^{-1/2},\]
    and choosing $N=X^{-1/2} T^{3}$ we get
    \begin{equation}\label{eq:stxresult}
        \sum_{r_j} X^{ir_j}e^{-r_j/T} \ll T^{2+\epsilon} X^{1/4}.
    \end{equation}
    The result for the sharp sum, $S(T,X)$, is obtained by a standard approximation argument.
\end{proof}
Theorem~\ref{intro:thm01} then follows immediately.
\begin{proof}[Proof of Theorem~\ref{intro:thm01}]
    From Theorem~\ref{1610:thm01} and summation by parts we have
    \[
	\sum_{|r_j| \leq T} \frac{X^{1+ir_j}}{1+ir_j} \ll T^{1+\epsilon} X^{5/4}.
	\]
    Therefore, by the explicit formula~\eqref{1610:eq001}, we conclude that
    \[
	E_{\Gamma}(X) = O \left( X^2 T^{-1} \log^2 X + T^{1+\epsilon} X^{5/4}\right).
	\]
    Choosing $T = X^{3/8}$, we finally obtain the bound $O(X^{13/8+\epsilon})$.
\end{proof}

\section{Second moment}\label{S04}

In this section we prove Theorem~\ref{intro:thm02} by
applying
the Selberg trace formula for a suitably chosen test function.
Let $q(x)$ be a smooth, even, non-negative function on $\reals$,
with compact support contained in $[-1,1]$ and of unit mass (i.e.~$\|q\|_1=1$).
Let $X>1$ and $0<\delta<1/4$, and consider the functions
\[
\begin{gathered}
g_s(x) = 4\left(\sinh\frac{x}{2}\right)^2\mathbf{1}_{[0,s]}(|x|),\quad s=\log X,
\\
g_\pm(x) = (g_{s\pm\delta}\ast q_\delta)(x),
\quad
q_\delta(x)=\delta^{-1}q(x/\delta).
\end{gathered}
\]
We claim that $\psi_\Gamma(X)$ is obtained, up to an error bounded by $O(X^{3/2})$,
by summing $g_s$ over hyperbolic and loxodromic conjugacy classes, with appropriate weights.
More precisely, we claim that
\begin{equation}\label{2609:eq005}
\psi_\Gamma(X) =
\sum_{\{T\}hyp+lox} \frac{g_s(\log N(T))\Lambda_{\Gamma}(
    N(T))}{|\mathcal{E}(T)||a(T)-a(T)^{-1}|^2} + O(X^{3/2})
\end{equation}
as $X\to\infty$.
To see this, observe that
\[
\psi_\Gamma(X) =
\sum_{\{T\}hyp+lox} \frac{g_s(\log N(T))\Lambda_{\Gamma}(N(T))}{|N(T)^{1/2}-N(T)^{-1/2}|^2},
\]
and notice that, for $z\in\mathbb{C}$,
\[
\frac{1}{||z|-|z|^{-1}|^2} = \frac{1}{|z-z^{-1}|^2} + O\left(\frac{1}{|z|^3}\right)\quad\text{as }|z|\to\infty.
\]
Moreover, recall from Section~\ref{S02} that
$|\mathcal{E}(T)|\neq 1$ only for finitely many $T$. We can therefore write
\[
\psi_\Gamma(X) =
\sum_{\{T\}hyp+lox} \frac{g_s(\log N(T))\Lambda_{\Gamma}(
    N(T))}{|\mathcal{E}(T)||a(T)-a(T)^{-1}|^2}
+
O\left(
1 +
\sum_{N(T)\leq X} \frac{\Lambda_{\Gamma}(N(T))}{N(T)^{1/2}}
\right).
\]
Using the bound $\psi_\Gamma(X)\ll X^2$ and summation by parts on the error, we get~\eqref{2609:eq005}.
The functions $g_\pm$ give smooth versions of the sum in~\eqref{2609:eq005}, which we denote by $\psi_\pm$.
By definition of $g_\pm$ we have, for $x\geq 0$,
\[
g_{-}(x) + O(\delta e^x\mathbf{1}_{[0,s]}(x))
\leq
g_s(x)
\leq
g_{+}(x) + O(\delta e^x\mathbf{1}_{[0,s]}(x)),
\]
and using again $\psi_\Gamma(X)\ll X^2$ this implies in turn:
\begin{equation}\label{2609:eq004}
\psi_{-}(X) + O(X^{3/2}+\delta X^2)
\leq
\psi_\Gamma(X)
\leq
\psi_{+}(X) + O(X^{3/2}+\delta X^2).
\end{equation}
Let $h_s(r)$
be the Fourier transform of $g_s$ (normalized as in~\eqref{2909:eq001}),
and $h_\pm=\widehat{g}_\pm=h_{s\pm\delta}\widehat{q}_\delta$.
We apply the Selberg trace formula (Theorem~\ref{s02:selberg-trace-formula})
to the pair $(g_\pm,h_\pm)$.
For simplicity we assume that the group has only one cusp at infinity,
although the same proof works for multiple cusps. We obtain
\[
\psi_\pm(X)
=
\sum_{j}h_\pm(r_{j})
-
\frac{1}{4\pi}\int_{-\infty}^{\infty}h_\pm(r)\frac{\varphi'}{\varphi}(1+ir)dr
- I - E  - P + O(1),
\]
where the terms $I$, $E$, $P$ are given in Theorem~\ref{s02:selberg-trace-formula}.
It is easy to prove, by definition of $h_\pm$
and the fact that
$\widehat{q}_\delta^{(j)}(r)\ll_{j, k}\delta^j(1+|\delta r|)^{-k}$,
together with standard estimates on hyperbolic functions, that the terms $I$, $E$, and $P$ contribute at most $O(X)$.
The eigenvalues $\lambda_j\in[0,1]$ contribute
\begin{equation}\label{2709:eq001}
M_\pm(X) := \sum_{\lambda_j\in[0,1]} h_\pm(r_j) = M(X) + O(\delta X^2 + X),
\end{equation}
and we are left to analyze the eigenvalues $\lambda_j>1$ and the continuous spectrum.
Before doing so, we prove a lemma that shows how to exploit the oscillation
of the function $h_\pm$ when doing integration over the variable $X$.

\begin{lemma}\label{selberg:lemma001}
Let $V,\Delta\in\reals$ with $1<\Delta\leq V$, and let
$r_1,r_2\in\reals$. We have
\[
\frac{1}{\Delta}\int_V^{V+\Delta}
h_{s\pm\delta}(r_1)h_{s\pm\delta}(r_2)dX
\ll
\frac{V^3}{\Delta}u(r_1)u(r_2)u(|r_1|-|r_2|),
\]
with $u(r)=(1+|r|)^{-1}$. The implied constant does not depend on $\delta$.
\end{lemma}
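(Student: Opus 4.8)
The plan is to make the dependence of $h_{s\pm\delta}$ on $X$ fully explicit and then to integrate over $X$ term by term. Inverting the normalisation~\eqref{2909:eq001} we have $h_{s}(r)=\int_{\reals}g_{s}(x)e^{irx}\,dx=\int_{-s}^{s}(e^{x}-2+e^{-x})e^{irx}\,dx$, and evaluating this elementary integral, with $X=e^{s}$, gives
\[
h_{s}(r)=\frac{X^{1+ir}-X^{-1-ir}}{1+ir}+\frac{X^{1-ir}-X^{-1+ir}}{1-ir}-\frac{4\sin(r\log X)}{r}.
\]
Replacing $X$ by $Xe^{\pm\delta}$ and $\log X$ by $\log X\pm\delta$ produces $h_{s\pm\delta}(r)$; every exponential prefactor that thereby appears, such as $e^{\pm\delta(1\pm ir)}$ and $e^{\pm i\delta r}$, has modulus at most $e^{1/4}$ since $0<\delta<1/4$, which is why the implied constant will not depend on $\delta$. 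Because $h_{s}$ is even we may assume $r_{1},r_{2}\ge 0$, so that $u(|r_{1}|-|r_{2}|)=u(r_{1}-r_{2})$.

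Write $h_{s\pm\delta}(r)=P(r)+Q(r)+R(r)$, where $P$ is the sum of the two terms of size $\ll X\,u(r)$, $Q$ the sum of the two of size $\ll X^{-1}u(r)$, and $R(r)=-4\sin(r(\log X\pm\delta))/r$, which satisfies $|R(r)|\ll\min(\log X,1/|r|)$ and $|\partial_{X}R(r)|\ll 1/X$. Expanding $h_{s\pm\delta}(r_{1})h_{s\pm\delta}(r_{2})$ yields nine pieces, the dominant one being $P(r_{1})P(r_{2})$. Here $P(r_{i})=\sum_{\sigma=\pm1}c_{\sigma}\,X^{1+i\sigma r_{i}}/(1+i\sigma r_{i})$ with $|c_{\sigma}|\le e^{1/4}$, so $P(r_{1})P(r_{2})$ is a bounded linear combination of terms $X^{2+i(\sigma_{1}r_{1}+\sigma_{2}r_{2})}$ divided by $(1+i\sigma_{1}r_{1})(1+i\sigma_{2}r_{2})$, with $\sigma_{1},\sigma_{2}\in\{\pm1\}$. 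The key estimate is
\[
\left|\int_{V}^{V+\Delta}X^{2+i\omega}\,dX\right|
=\left|\frac{(V+\Delta)^{3+i\omega}-V^{3+i\omega}}{3+i\omega}\right|
\ll\frac{V^{3}}{1+|\omega|}=V^{3}u(\omega),
\]
using $\Delta\le V$ to bound $(V+\Delta)^{3}\ll V^{3}$. The denominators contribute $u(r_{1})u(r_{2})$, while summing over the four sign combinations leaves $2u(r_{1}+r_{2})+2u(r_{1}-r_{2})\ll u(|r_{1}|-|r_{2}|)$, since for $r_{1},r_{2}\ge 0$ one has $|r_{1}+r_{2}|\ge|r_{1}-r_{2}|=\bigl||r_{1}|-|r_{2}|\bigr|$. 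Dividing by $\Delta$ gives the asserted bound for the $P(r_{1})P(r_{2})$ contribution.

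It remains to check that the other eight pieces are of strictly lower order. The three pieces built only from $P$ and $Q$ are handled exactly as above, with $X^{2}$ replaced by $X^{\nu}$ for some $\nu\le 0$ and $\bigl|\int_{V}^{V+\Delta}X^{\nu+i\omega}\,dX\bigr|\ll V^{\nu+1}u(\omega)\ll V^{2}u(\omega)$. The five pieces containing a factor $R(r_{i})$ are more delicate: one cannot bound $R(r_{i})$ pointwise near $r_{i}=0$ without destroying the decay in $u(r_{i})$, so instead one uses that $R(r_{i})$ is slowly varying in $X$ (derivative $\ll 1/X$) while the complementary factor oscillates, and integrates by parts once in $X$ to transfer the oscillation onto it; for the piece $R(r_{1})R(r_{2})$ one first applies the identity $\sin A\sin B=\tfrac12(\cos(A-B)-\cos(A+B))$ to exhibit the frequencies $r_{1}\mp r_{2}$ explicitly. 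In every case one arrives at a bound $O\bigl(V^{2}(\log V)^{O(1)}u(r_{1})u(r_{2})u(|r_{1}|-|r_{2}|)\bigr)$, which, after division by $\Delta$, is absorbed by the main term because $V^{2}(\log V)^{O(1)}\ll V^{3}$ for $V$ large. I expect this treatment of the $\sin(r\log X)/r$ terms for small $r_{1}$ or $r_{2}$ to be the only genuine obstacle: naive size estimates there lose the required $u$-decay, and the cancellation must be recovered by integration by parts in $X$; everything else is bookkeeping organised around the single oscillatory estimate $\int_{V}^{V+\Delta}X^{2+i\omega}\,dX\ll V^{3}u(\omega)$.
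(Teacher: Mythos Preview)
Your proof is correct and follows exactly the approach of the paper, which gives only a two-sentence sketch: compute $h_s(r)$ explicitly as $\frac{2\sinh(s(1+ir))}{1+ir}+\frac{2\sinh(s(1-ir))}{1-ir}-\frac{4\sin(sr)}{r}$, expand the product $h_{s\pm\delta}(r_1)h_{s\pm\delta}(r_2)$, and for each resulting term either bound the integrand in absolute value or integrate by parts once. Your decomposition $P+Q+R$, the core estimate $\int_V^{V+\Delta}X^{2+i\omega}\,dX\ll V^3u(\omega)$, and the observation that the $R$-pieces require integration by parts when one of the $r_i$ is small are precisely the details the paper leaves implicit.
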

\begin{proof}
Recall that $s=\log X$.
By definition of $g_s$ we can write, for $r\in\reals$,
\[
h_s(r)
=\frac{2\sinh(s(1+ir))}{1+ir}+\frac{2\sinh(s(1-ir))}{1-ir}-\frac{4\sin(sr)}{r}.
\]
Taking $h_{s\pm\delta}(r_1)$ against $h_{s\pm\delta}(r_2)$
and integrating in $X$, the desired inequality follows by either bounding in absolute
value the integrand or by integrating by parts first.
\end{proof}

We can now estimate the contribution of the discrete and continuous spectrum.
For the discrete spectrum $\lambda_j=1+r_j^2$ with $r_j>0$, we have
\[
\frac{1}{\Delta}\int_V^{V+\Delta} \Big|\sum_{r_j>0} h_\pm(r_j)\Big|^2 dX
=
\sum_{r_j,r_\ell>0} \widehat{q}_\delta(r_j)\widehat{q}_\delta(r_\ell)
\frac{1}{\Delta}\int_V^{V+\Delta} h_{s\pm\delta}(r_j)h_{s\pm\delta}(r_\ell)dX,
\]
and we bound the integral using Lemma~\ref{selberg:lemma001}, obtaining the estimate
(up to constants)
\[
\frac{V^3}{\Delta}
\sum_{r_j,r_\ell>0} |\widehat{q}_\delta(r_j)\widehat{q}_\delta(r_\ell)|
u(r_j)u(r_\ell)u(|r_j|-|r_\ell|).
\]
The double sum can be estimated by doing a unit interval decomposition and using~\eqref{weyl-law}, obtaining
\begin{equation}\label{2709:eq002}
O\left(\frac{V^3}{\Delta\delta^3}\log(\delta^{-1})\right).
\end{equation}
The analysis of the continuous spectrum is similar, and contributes a quantity not bigger than~\eqref{2709:eq002}.
Combining~\eqref{2609:eq004}, \eqref{2709:eq001} and~\eqref{2709:eq002}, we get
\[
\begin{split}
\frac{1}{\Delta}\int_V^{V+\Delta}
|\psi_\Gamma(X)
&
-M(X)|^2 dX
\\
&\leq
\frac{1}{\Delta}\int_V^{V+\Delta} |\psi_\pm(X)-M_\pm(X)|^2 dX + O(\delta^2 V^4 + V^3)
\\
&\ll
\delta^2 V^4 + V^3 + \frac{V^3}{\Delta \delta^3} \log(\delta^{-1}).
\end{split}
\]
Choosing $\delta=(V\Delta)^{-\frac 15}\log^{\frac 15}(V\Delta)$ we obtain the theorem.

\bibliographystyle{amsplain}
\providecommand{\bysame}{\leavevmode\hbox to3em{\hrulefill}\thinspace}
\providecommand{\MR}{\relax\ifhmode\unskip\space\fi MR }
\providecommand{\MRhref}[2]{%
  \href{http://www.ams.org/mathscinet-getitem?mr=#1}{#2}
}
\providecommand{\href}[2]{#2}

\end{document}